\numberwithin{equation}{section}
\newtheorem{thm}{Theorem}[section]
\newtheorem{prop}[thm]{Proposition}
\newdefinition{defn}{Definition}[section]
\newdefinition{conj}{Conjecture}[section]
\newdefinition{exmp}{Example}[section]
\newtheorem{rem}{Remark}[section]
\journal{Journal of Computational Physics}
\begin{document}
\begin{frontmatter}

%% Title, authors and addresses

%% use the tnoteref command within \title for footnotes;
%% use the tnotetext command for theassociated footnote;
%% use the fnref command within \author or \address for footnotes;
%% use the fntext command for theassociated footnote;
%% use the corref command within \author for corresponding author footnotes;
%% use the cortext command for theassociated footnote;
%% use the ead command for the email address,
%% and the form \ead[url] for the home page:
%% \title{Title\tnoteref{label1}}
%% \tnotetext[label1]{}
%% \author{Name\corref{cor1}\fnref{label2}}
%% \ead{email address}
%% \ead[url]{home page}
%% \fntext[label2]{}
%% \cortext[cor1]{}
%% \address{Address\fnref{label3}}
%% \fntext[label3]{}

\title{A Bloch decomposition-based stochastic Galerkin method for quantum dynamics with a random external potential\tnoteref{t1}}
\tnotetext[t1]{This work was partially supported by the NSFC Projects No.~11322113, ~91330203.}

%% use optional labels to link authors explicitly to addresses:
%% \author[label1,label2]{}
%% \address[label1]{}
%% \address[label2]{}

\author[rvt]{Zhizhang Wu}
\ead{wzz14@mails.tsinghua.edu.cn}

\author[rvt]{Zhongyi Huang \corref{cor1}}
\ead{zhuang@math.tsinghua.edu.cn}

\cortext[cor1]{Corresponding author.}
\address[rvt]{Department of Mathematical Sciences, Tsinghua University, Beijing, 100084, China}

\begin{abstract}
%% Text of abstract
In this paper, we consider the numerical solution of the one-dimensional Schr\"{o}dinger equation with a periodic lattice potential and a random external potential. This is an important model in solid state physics where the randomness is involved to describe some complicated phenomena that are not exactly known. Here we generalize the Bloch decomposition-based time-splitting pseudospectral method to the stochastic setting using the generalize polynomial chaos with a Galerkin procedure so that the main effects of dispersion and periodic potential are still computed together. We prove that our method is unconditionally stable and numerical examples show that it has other nice properties and is more efficient than the traditional method. Finally, we give some numerical evidence for the well-known phenomenon of Anderson localization.
\end{abstract}

\begin{keyword}
%% keywords here, in the form: keyword \sep keyword
Schr\"{o}dinger equation \sep Bloch decomposition \sep time-splitting \sep generalized polynomial chaos \sep uncertainty quantification
%% PACS codes here, in the form: \PACS code \sep code

%% MSC codes here, in the form: \MSC code \sep code
%% or \MSC[2008] code \sep code (2000 is the default)

\end{keyword}

\end{frontmatter}

%% \linenumbers

%% main text
%\section{}
%\label{}
\section{Introduction}
\label{sec 1}
An important problem in solid state physics is to describe the motion of electrons within periodic potentials generated by the ionic cores. With the emergence of this novel structure, this problem has been studied from a physical as well as a mathematical point of view and extensive results have been achieved \cite{Asch1998,Blount1962,Luttinger1951,Panati2003,Hoverman2001,Zak1968}. One of the most brilliant ideas is to combine the dispersion and periodic lattice effects together, which results in a new energy band structure, known as the Bloch band \cite{Bloch1928}. The motivation of such a combination is a separation of scales in this problem, where the external fields vary on much larger scales than the periodic potentials and can be considered weak in the comparison to the periodic fields \cite{Ashcroft1976}.

To mathematically formulate this problem, consider the one-dimensional deterministic Schr\"{o}dinger equation for electrons in a semiclassical asymptotic scaling,
\begin{equation} \label{deterministic Schrodinger}
\left\{
\begin{aligned}
i\varepsilon\partial_t\psi & = -\frac{\varepsilon^2}{2}\partial_{xx}\psi+V_{\Gamma}(\frac{x}{\varepsilon})\psi+U(x)\psi, &  & x\in\mathbb{R},  t\in\mathbb{R}, \\
\psi|_{t=0} & = \psi_{in}(x).
\end{aligned}
\right.
\end{equation}
where $0< \varepsilon \ll1$ is the small semiclassical parameter describing the microscopic/macroscopic scale ratio. $U(x)\in\mathbb{R}$ is the external potential and the highly oscillating lattice potential $V_{\Gamma}(y)\in\mathbb{R}$ is assumed to be periodic with respect to some regular lattice $\Gamma$. For definiteness we may assume that
\begin{equation} \label{periodic potential}
V_{\Gamma}(y+2\pi)=V_{\Gamma}(y), \quad \forall y \in \mathbb{R},
\end{equation}
i.e., $\Gamma=2\pi\mathbb{Z}$.

There have also been some researches on the numerical methods for this problem \cite{Gosse I,Gosse II,Gosse III,Jin2011}. Among these methods, the Bloch decomposition-based time-splitting pseudospectral method (BD), developed by Huang et al \cite{Huang2007,Huang2008,Huang2010}, is based on the classical Bloch decomposition theory. A prominent advantage of this method is that it converges with $\Delta x=O(\varepsilon)$ and $\Delta t=O(1)$ and hence it works better in the case where $\varepsilon \ll1$ than other traditional methods. Furthermore, it comprises spectral convergence for the spacial discretization and second order convergence in time.

If the medium in which electrons move is disordered like amorphous solids and random alloys \cite{Leschke2005,Johnson1986,Kirsh1989} or the lattices are influenced by thermal fluctuation or randomly distributed impurities \cite{Fisher1991,Fisher1991Thermal,Avishai1992,Halperin1965}, the equation we consider should convert from \eqref{deterministic Schrodinger} to a different but similar one with random potential \cite{Anderson1958,Roati2008}, which will be mathematically formulated below. This is also a basic problem in quantum mechanics. The random Schr\"{o}dinger operators have been intensively studied in a theoretical way \cite{Kirsh1982,Kirsh1982Spectrum,Bal1999,Bal2011,Fouqe2007}. However, the numerical literatures on them, especially on the problem \eqref{deterministic Schrodinger} with random potentials are not so abundant \cite{Bal2006}. Indeed, this kind of problems belongs to uncertainty quantification (UQ) for PDEs with randomness, where randomness could appear in initial conditions, boundary conditions, coefficients of the equations, etc. There have been quite a few numerical methods developed for UQ in recent years. The Monte Carlo (MC) method and the stochastic collocation (SC) method \cite{Xiu2005,Xiu2009,Xiu2010} are two of the most popular methods based on sampling and repetitive calls of deterministic solvers, which draw statistical information from the ensemble of solutions. Another popular method is the generalized polynomial chaos (gPC) approach \cite{Xiu2002}, which is non-sampling and is a generalization of the Wiener-Hermite polynomial chaos expansion. Combined with the stochastic Galerkin method, it has been successfully applied to many physical and engineering problems \cite{Hu2015,Jin2015}, where fast convergence can be observed if the solution is sufficiently smooth.

In this paper, we mainly focus on the problem of motions of electrons in a periodic potential influenced by some weakly random factors where the periodic lattice structure is not changed, (e.g. a random electric field, etc), so that the randomness can be restricted to the external potential $U(x)$ in \eqref{deterministic Schrodinger}. To cope with this problem, we combine the Bloch decomposition-based time-splitting method with the generalized polynomial chaos expansion. With the mathematical setting and stochastic Galerkin procedure, we would obtain a deterministic PDE system, to which we can easily generalize the BD algorithm. This new method, named as the Bloch decomposition-based stochastic Galerkin method (BD-SG), preserves the advantage of allowing a relatively larger time step size and comprising spectral convergence for the spatial discretization, second order convergence in time and fast convergence in gPC order. We shall also prove that this method enjoys the property of weak conservation of mass and is unconditionally stable. Our numerical examples will show that our method is efficient and even enjoys the property of weak conservation of energy. Finally, as Anderson localization \cite{Anderson1958,Frohlich1983,Roati2008} is an important phenomenon in wave propagation in disordered media, we give some numerical evidence to this phenomenon.

The paper is organized as follows: In Section \ref{sec 2}, we briefly review the Bloch decomposition-based time-splitting method. In Section \ref{sec 3}, we present our new BD-SG scheme and introduce the classical time-splitting stochastic collocation method as a comparison. In Section \ref{sec 4}, we analyze the properties of our algorithm. In Section \ref{sec 5}, numerical examples are presented to show the feasibility and nice properties of our algorithm. Also, numerical evidence of Anderson localization is included. Finally, we give a conclusion in Section \ref{sec 6}.

\section{Review of Bloch decomposition-based method}
\label{sec 2}
In this section, we will briefly recapitulate the numerical method developed in \cite{Huang2007}. For the convenience of the reader we first recall some basic definitions and important facts to be used in dealing with the periodic Schr\"{o}dinger operator.

\subsection{Review of Bloch decomposition}
For the sake of simplicity, set $y=x/\varepsilon$. With $V_{\Gamma}$ obeying \eqref{periodic potential}, we have the following \cite{Ashcroft1976}:
\begin{itemize}
\item the fundamental domain of our lattice $\Gamma=2\pi\mathbb{Z}$ is $\mathcal{C}=(0,2\pi)$.
\item the dual lattice $\Gamma^*$ is then simply given by $\Gamma^*=\mathbb{Z}$.
\item the fundamental domain of the dual lattice, i.e., the (first) Brillouin zone, is $\mathcal{B}=(-\frac{1}{2},\frac{1}{2})$.
\end{itemize}

Next, let us consider the eigenvalue problem
\begin{equation} \label{Bloch eigen problem}
\left\{
\begin{aligned}
\left(-\frac{1}{2}\partial_{yy}+V_{\Gamma}(y)\right)\varphi_m(y,k)& =  E_m(k)\varphi_m(y,k), \\
\varphi_m(y+2\pi,k) & =  e^{i2\pi ky}\varphi_m(y,k), \quad \forall k \in \mathcal{B}.
\end{aligned}
\right.
\end{equation}
It's well known \cite{Teufel2003,Wilcox1978} that under very mild conditions on $V_{\Gamma}$, the problem \eqref{Bloch eigen problem} has a complete set of eigenfunctions $\varphi_m(y,k)$, $m \in \mathbb{N}$, which is an orthonormal basis in $L^2(\mathcal{C})$ for each fixed $k\in\bar{\mathcal{B}}$. Correspondingly there exists a countable family of real eigenvalues that can be ordered according to $E_1(k)\leq E_2(k)\leq\cdots\leq E_m(k)\leq\cdots,\quad m\in\mathbb{N}$. The set $\{E_m(k)|k\in\mathcal{B}\}\subset\mathbb{R}$ is called the $m$th energy band of the operator $H$ \cite{Bloch1928}.

For convenience we will usually rewrite $\varphi_m(y,k)$ as
\begin{equation}
\varphi_m(y,k)=e^{iky}\chi_m(y,k), \quad \forall m\in\mathbb{N},
\end{equation}
where now $\chi_m(\cdot,k)$ is $2\pi$-periodic and called a Bloch function. In terms of $\chi_m(y,k)$, the eigenvalue problem \eqref{Bloch eigen problem} converts to
\begin{equation} \label{Bloch eigen problem 2}
\left\{
\begin{aligned}
H(k)\chi_m(y,k)& = E_m(k)\chi_m(y,k), \\
\chi_m(y+2\pi,k) & = \chi_m(y,k), \quad \forall k \in \mathcal{B}.
\end{aligned}
\right.
\end{equation}
where
\begin{equation}
H(k):=\frac{1}{2}(-i\partial_y+k)^2+V_{\Gamma}(y)
\end{equation}
denotes the so-called shifted Hamiltonian.

By solving the eigenvalue problem \eqref{Bloch eigen problem}, the Bloch decomposition allows us to decompose the Hilbert space $\mathcal{H}=L^2(\mathbb{R})$ into a direct sum of orthogonal band spaces \cite{Markowich1994,Wilcox1978}, i.e.,
\begin{equation}
L^2(\mathbb{R})=\displaystyle\bigoplus_{m=1}^\infty \mathcal{H}_m, \quad \mathcal{H}_m :=\left\{f_m(y)=\int_{\mathcal{B}}g(k)\varphi_m(y,k)dk,g\in L^2(\mathcal{B})\right\}.
\end{equation}
This leads to
\begin{equation}
\forall f\in L^2(\mathbb{R}),\quad f(y)=\sum_{m\in\mathbb{N}}f_m(y),\quad f_m\in\mathcal{H}_m.
\end{equation}
The corresponding projection of $f$ onto the $m$th band space is given by \cite{Markowich1994}
\begin{equation}
f_m(y)\equiv (\mathbb{P}_m f)(y)=\int_{\mathcal{B}}\left(\int_{\mathbb{R}}f(\zeta)\bar{\varphi}_m(\zeta,k)d\zeta\right)\varphi_m(y,k)dk.
\end{equation}
In what follows, we will denote by
\begin{equation} \label{Bloch coefficient}
C_m(k):=\int_{\mathbb{R}}f(\zeta)\bar{\varphi}_m(\zeta,k)d\zeta
\end{equation}
the coefficient of the Bloch decomposition. The Bloch decomposition reduces the equation
\begin{equation}
i\partial_t\psi=-\frac{1}{2}\partial_{yy}\psi+V_{\Gamma}(y)\psi, \quad \psi |_{t=0}=\psi_{in}(y),
\end{equation}
into countably many, exactly solvable problems on $\mathcal{H}_m$. In each band space, we simply obtain
\begin{equation} \label{band ode}
i\partial_t\psi_m=E_m(-i\partial_y)\psi_m, \quad \psi_m |_{t=0}=(\mathbb{P}_m\psi_{in})(y),
\end{equation}
where $E_m(-i\partial_y)$ denotes the Fourier multiplier corresponding to the symbol $E_m(k)$. By using the Fourier transformation $\mathcal{F}$, \eqref{band ode} is solved by
\begin{equation} \label{Fourier evolution of mth band}
\psi_m(t,y)=\mathcal{F}^{-1}\left(e^{-iE_m(k)t}(\mathcal{F}(\mathbb{P}_m\psi_{in}))(k)\right).
\end{equation}
Here the energy band $E_m(k)$ is understood to be periodically extended to all of $\mathbb{R}$.

\subsection{The Bloch decomposition-based time-splitting algorithm}
We shall recall here the most important steps of this algorithm.
As a necessary preprocessing step, we first need to calculate the energy band $E_m(k)$ and eigenfunctions $\varphi_m(y,k)$ from \eqref{Bloch eigen problem} or equivalently \eqref{Bloch eigen problem 2}. To numerically solve the eigenvalue problem \eqref{Bloch eigen problem 2}, we will approximate it by an algebraic eigenvalue problem (for more details, please refer to \cite{Huang2007}). Upon solving this algebraic eigenvalue problem, we will obtain $\{\hat{\chi}_m(\lambda,k)|\lambda \in \{-\Lambda,\ldots,\Lambda-1\}\subset\mathbb{Z}\}$, the Fourier coefficient of $\chi_m(y,k)$, with which we can reconstruct $\chi_m(y,k)$ for each $k$.

For convenience of the computations, we consider \eqref{deterministic Schrodinger} on a bounded domain $\mathcal{D}=[0,2\pi]$ with periodic boundary conditions, which approximates the one-dimensional whole-space problem as long as the observed wave function does not touch the boundaries $x=0,2\pi$. Then for some $N\in\mathbb{N},t>0$, let the time step be $\Delta t=t/N$ and $t_n=n\Delta t,\quad n=1,\ldots,N$. Suppose further that there are $L\in\mathbb{N}$ lattice cells of $\Gamma$ within $\mathcal{D}$ and $R\in\mathbb{N}$ grid points in each lattice cell, which yields the following discretization
\begin{equation}
\left\{
\begin{aligned}
k_\ell& = & -\frac{1}{2}+\frac{\ell-1}{L}, \quad where \quad \ell\in\{1,\ldots,L\}\subset\mathbb{N},\\
y_r & = & \frac{2\pi(r-1)}{R}, \quad where \quad r\in\{1,\ldots,R\}\subset\mathbb{N}.
\end{aligned}
\right.
\end{equation}
Thus we evaluate the solution at the grid points
\begin{equation}
x_{\ell,r}=\varepsilon\big(2\pi(\ell-1)+y_r\big).
\end{equation}
Now we introduce the following unitary transformation of $f\in L^2(\mathbb{R})$
\begin{equation} \label{unitary transformation}
(Tf)(y,k)\equiv\tilde{f}(y,k):=\sum_{\gamma\in\mathbb{Z}}f\big(\varepsilon(y+2\pi\gamma)\big)e^{-i2\pi k\gamma}, \quad y\in\mathcal{C},k\in\mathcal{B},
\end{equation}
such that $\tilde{f}(y,k)$ has the same periodicity properties w.r.t. $y$ and $k$ as the Bloch eigenfunctions $\varphi_m(y,k)$ and can be decomposed into a linear combination of such eigenfunctions. Moreover, it can be shown that the Bloch coefficient in \eqref{Bloch coefficient} can be equivalently written as
\begin{equation} \label{comptation of Bloch coefficient}
C_m(k)=\int_{\mathcal{C}}\tilde{f}(y,k)\bar{\varphi}_m(y,k)dy.
\end{equation}

We are now ready to set up the Bloch decomposition-based time-splitting algorithm. Suppose that at time $t_n$ we are given $\psi(t_n,x_{\ell,r})\approx\psi_{\ell,r}^n$. Then $\psi_{\ell,r}^{n+1}$ is obtained as follows:

\emph{Step 1.} Solve the equation
\begin{equation}
i\varepsilon\partial_t\psi=-\frac{\varepsilon^2}{2}\partial_{xx}\psi+V_{\Gamma}\left(\frac{x}{\varepsilon}\right)\psi
\end{equation}
on a fixed time interval $\Delta t$. Here we need to apply the transformation $T$ defined in \eqref{unitary transformation} to $\psi$ for each fixed $t$ so that $T\psi(t,\cdot)\equiv\tilde{\psi}(t,y,k)$ satisfies the same periodic boundary conditions w.r.t. $y$ as $\varphi_m(y,k)$, where $y=x/\varepsilon$. Then $\tilde{\psi}(t,y,k)$ can be decomposed as
\begin{equation}
\tilde{\psi}(t,y,k)=\sum_{m\in\mathbb{N}}\mathbb{P}_m\tilde{\psi}=\sum_{m\in\mathbb{N}}C_m(t,k)\varphi_m(y,k).
\end{equation}
By \eqref{band ode}, we have the following evolution equation for the Bloch coefficient $C_m(t,k)$
\begin{equation}
i\varepsilon\partial_t C_m(t,k)=E_m(k)C_m(t,k)
\end{equation}
which yields
\begin{equation} \label{Bloch coefficient evolution}
C_m(t,k)=C_m(0,k)e^{-iE_m(k)t/\varepsilon}.
\end{equation}

\emph{Step 2.} Solve the ODE
\begin{equation}
i\varepsilon\partial_t\psi=U(x)\psi
\end{equation}
on the same time interval where the solution in Step 1 serves as the initial condition here. And the exact solution is
\begin{equation}
\psi(t,x)=\psi(0,x)e^{-iU(x)t/\varepsilon}.
\end{equation}
The algorithm given above is first order in time but we can easily obtain a second order scheme by the Strang's splitting method. Indeed, Step 1 consists of several intermediate steps as given below:

\emph{Step 1.1.} Compute $\tilde{\psi}$ at $t_n$ by
\begin{equation} \label{step 1.1}
\tilde{\psi}_{\ell,r}^n=\sum_{j=1}^L \psi_{j,r}^n e^{-i2\pi k_{\ell}(j-1)}.
\end{equation}

\emph{Step 1.2.} Compute the Bloch coefficients via \eqref{comptation of Bloch coefficient},
\begin{equation}
\begin{split}
C_m(t_n,k_{\ell})\approx C_{m,\ell}^n & =\frac{2\pi}{R}\sum_{r=1}^R \tilde{\psi}_{m,\ell}^n \overline{\varphi_m}(y_r,k_\ell) \\
 & =\frac{2\pi}{R}\sum_{r=1}^R \tilde{\psi}_{m,\ell}^n \overline{\chi_m}(y_r,k_\ell)e^{-ik_\ell y_r} \\
 & \approx \frac{2\pi}{R}\sum_{r=1}^R \tilde{\psi}_{m,\ell}^n \sum_{\lambda=-R/2}^{R/2} \overline{\hat{\chi}_m}(\lambda,k_\ell)e^{-i(k_\ell+\lambda)y_r}.
\end{split}
\end{equation}

\emph{Step 1.3.} Compute the Bloch coefficients at $t_{n+1}$ via \eqref{Bloch coefficient evolution}
\begin{equation}
C_{m,\ell}^{n+1}=C_{m,\ell}^n e^{-iE_m(k_\ell)\Delta t/\varepsilon}.
\end{equation}

\emph{Step 1.4.} Obtain $\tilde{\psi}$ at $t^{n+1}$ by summing up all the band contributions
\begin{equation}
\tilde{\psi}_{\ell,r}^{n+1}=\sum_{m=1}^M C_{m,\ell}^{n+1} \chi_m(y_r,k_\ell) e^{ik_\ell y_r}.
\end{equation}

\emph{Step 1.5.} Implement the inverse transformation
\begin{equation} \label{step 1.5}
\psi_{\ell,r}^{n+1}=\frac{1}{L}\sum_{j=1}^L \tilde{\psi}_{j,r}^{n+1} e^{i2\pi k_j(\ell-1)}.
\end{equation}
This concludes the numerical procedure in Step 1.

\subsection{Stability of the algorithm}
It has been mentioned in some references \cite{Huang2007,Huang2008} that this splitting scheme conserves the total mass $||\psi(t,x)||_{L^2}$ on the fully discrete level and hence it's unconditionally stable. But no rigorous proofs have been found. So we are trying to make up for this.

It's easy to see that Step 2 conserves the total mass since $U(x)\in\mathbb{R}$. As for Step 1, by \eqref{Fourier evolution of mth band} and the orthogonality of the Bloch eigenfunctions $\varphi_m$, it's natural that Step 1 should conserve the total mass. To be precise, we give the following proposition.
\begin{prop} \label{prop of 1d conservation}
Assume that the number of bands is the same as that of grid points in each lattice, i.e., $M=R$, then Step 1 in the Bloch decomposition based time-splitting scheme conserves the total mass, i.e., if $\psi^{n+1}$ is obtained from $\psi^n$ through Step 1, then $||\psi^{n+1}||_{L^2}=||\psi^n||_{L^2}$.
\end{prop}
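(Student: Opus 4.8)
The plan is to regard Step~1 as the composition of its five sub-steps (Steps~1.1 through 1.5) and to check that each of them preserves the relevant discrete $\ell^2$ norm; since the fully discrete mass $\|\psi^n\|_{L^2}$ is a fixed positive multiple of $\big(\sum_{\ell=1}^{L}\sum_{r=1}^{R}|\psi_{\ell,r}^n|^2\big)^{1/2}$, it suffices to prove $\sum_{\ell,r}|\psi_{\ell,r}^{n+1}|^2=\sum_{\ell,r}|\psi_{\ell,r}^n|^2$. For Steps~1.1 and 1.5 I would note that, because $k_\ell-k_{\ell'}=(\ell-\ell')/L$, the matrix $F_{\ell j}=e^{-i2\pi k_\ell(j-1)}$ appearing in \eqref{step 1.1} satisfies $FF^\ast=L\,\mathrm{Id}$; hence Step~1.1 gives $\sum_{\ell}|\tilde\psi_{\ell,r}^n|^2=L\sum_{j}|\psi_{j,r}^n|^2$ for every $r$, while Step~1.5, being $\tfrac1L F^\ast$ applied to $\tilde\psi^{n+1}$ (cf.~\eqref{step 1.5}), gives $\sum_{\ell}|\psi_{\ell,r}^{n+1}|^2=\tfrac1L\sum_{j}|\tilde\psi_{j,r}^{n+1}|^2$; in particular Step~1.5 $\circ$ Step~1.1 is the identity. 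For Step~1.3, since $E_m(k)\in\mathbb{R}$ the factor $e^{-iE_m(k_\ell)\Delta t/\varepsilon}$ has modulus one, so $\sum_{m}|C_{m,\ell}^{n+1}|^2=\sum_{m}|C_{m,\ell}^n|^2$ for each $\ell$.

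The core of the argument, and the place where the hypothesis $M=R$ is used, is a discrete Parseval identity for the Bloch coefficients. By construction the numerically computed Bloch functions are trigonometric polynomials $\chi_m(y,k_\ell)=\sum_{\lambda}\hat\chi_m(\lambda,k_\ell)e^{i\lambda y}$ with $\lambda$ running over $R$ consecutive integers, whose coefficient vectors are the eigenvectors of the Hermitian algebraic eigenvalue problem approximating $H(k_\ell)$, normalised so that $2\pi\sum_{\lambda}\hat\chi_m(\lambda,k_\ell)\overline{\hat\chi_{m'}(\lambda,k_\ell)}=\delta_{mm'}$, consistent with $\int_{\mathcal C}|\chi_m|^2\,dy=1$. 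Because exactly $R$ Fourier modes are sampled at the $R$ equispaced nodes $y_r$, the elementary identity $\tfrac1R\sum_{r=1}^{R}e^{i(\lambda-\lambda')y_r}=\delta_{\lambda\lambda'}$ is valid for all such $\lambda,\lambda'$, and this transfers the orthonormality to the grid:
\begin{equation}
\frac{2\pi}{R}\sum_{r=1}^{R}\varphi_m(y_r,k_\ell)\,\overline{\varphi_{m'}(y_r,k_\ell)}=\delta_{mm'},\qquad \varphi_m(y_r,k_\ell)=e^{ik_\ell y_r}\chi_m(y_r,k_\ell),
\end{equation}
the phases $e^{ik_\ell y_r}$ cancelling in the product $\varphi_m\overline{\varphi_{m'}}$. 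When $M=R$, the $R$ vectors $\big(\varphi_m(y_r,k_\ell)\big)_{r=1}^{R}$, $m=1,\dots,R$, therefore form an orthonormal basis of $\mathbb{C}^R$ for the inner product $\langle u,v\rangle_R=\tfrac{2\pi}{R}\sum_r u_r\bar v_r$. Step~1.2 computes precisely the coordinates $C_{m,\ell}=\langle\tilde\psi_{\ell,\cdot},\varphi_m(\cdot,k_\ell)\rangle_R$ of the vector $\tilde\psi_{\ell,\cdot}$ in this basis, and Step~1.4 reconstructs $\tilde\psi_{\ell,\cdot}$ from them, so Step~1.4 $\circ$ Step~1.2 is the identity on $\mathbb{C}^R$ and the finite-dimensional Parseval relation gives $\tfrac{2\pi}{R}\sum_{r}|\tilde\psi_{\ell,r}|^2=\sum_{m=1}^{R}|C_{m,\ell}|^2$ for each $\ell$, applied to $\tilde\psi^n$ through Step~1.2 and to $\tilde\psi^{n+1}$ through Step~1.4.

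Chaining the five identities --- Step~1.1: $\sum_{\ell,r}|\tilde\psi^n_{\ell,r}|^2=L\sum_{j,r}|\psi^n_{j,r}|^2$; Step~1.2: $\tfrac{2\pi}{R}\sum_r|\tilde\psi^n_{\ell,r}|^2=\sum_m|C^n_{m,\ell}|^2$; Step~1.3: $\sum_m|C^{n+1}_{m,\ell}|^2=\sum_m|C^n_{m,\ell}|^2$; Step~1.4: $\tfrac{2\pi}{R}\sum_r|\tilde\psi^{n+1}_{\ell,r}|^2=\sum_m|C^{n+1}_{m,\ell}|^2$; Step~1.5: $\sum_{\ell,r}|\psi^{n+1}_{\ell,r}|^2=\tfrac1L\sum_{j,r}|\tilde\psi^{n+1}_{j,r}|^2$ --- gives $\sum_{\ell,r}|\psi^{n+1}_{\ell,r}|^2=\sum_{\ell,r}|\psi^n_{\ell,r}|^2$, hence $\|\psi^{n+1}\|_{L^2}=\|\psi^n\|_{L^2}$. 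I expect the one genuinely delicate point to be the \emph{exact} (rather than merely quadrature-approximate) discrete orthogonality of the computed Bloch functions: it is exact only because the Fourier truncation retains as many modes as there are grid points per lattice cell and because the algebraic eigenproblem is Hermitian, and this is precisely where $M=R$ is indispensable --- for $M<R$, Steps~1.2--1.4 would only perform an orthogonal projection onto an $M$-dimensional subspace, giving at best $\|\psi^{n+1}\|_{L^2}\le\|\psi^n\|_{L^2}$.
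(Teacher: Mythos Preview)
Your proof is correct and follows essentially the same route as the paper: both arguments reduce mass conservation to the DFT orthogonality of $e^{-i2\pi k_\ell(j-1)}$ for Steps~1.1 and~1.5, the unimodularity of $e^{-iE_m(k_\ell)\Delta t/\varepsilon}$ for Step~1.3, and a discrete Parseval identity for Steps~1.2 and~1.4 coming from the fact that, when $M=R$, the vectors $(\varphi_m(y_r,k_\ell))_r$ are the columns of a (scaled) unitary matrix built from the eigenvectors of the Hermitian algebraic eigenproblem. The only cosmetic difference is that the paper writes the argument as one long chain of equalities and records both the row orthogonality $\sum_r\varphi_m\overline{\varphi_{m'}}=\tfrac{R}{2\pi}\delta_{mm'}$ and the column orthogonality $\sum_m\varphi_m(y_r)\overline{\varphi_m(y_s)}=\tfrac{R}{2\pi}\delta_{rs}$ separately, whereas you package both as ``orthonormal basis $\Rightarrow$ Parseval''; your closing remark that $M<R$ yields only an inequality is a nice addition not present in the paper.
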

\begin{proof}
We have
\begin{equation}
\begin{split}
\frac{LR}{2\pi}||\psi^{n+1}||_{L^2}^2 & =\sum_{\ell=1}^L\sum_{r=1}^R |\psi_{\ell,r}^{n+1}|^2=\sum_{\ell=1}^L\sum_{r=1}^R \left|\frac{1}{L}\sum_{j=1}^L \tilde{\psi}_{j,r}^{n+1} e^{i2\pi k_j(\ell-1)}\right|^2 \\
 & = \frac{1}{L}\sum_{j=1}^L\sum_{r=1}^R |\tilde{\psi}_{j,r}^{n+1}|^2=\frac{1}{L}\sum_{j=1}^L\sum_{r=1}^R \left|\sum_{m=1}^M C_{m,j}^{n+1}\varphi_m(y_r,k_j)\right|^2 \\
 & = \frac{R}{2\pi}\frac{1}{L}\sum_{j=1}^L\sum_{m=1}^M |C_{m,j}^{n+1}|^2=\frac{R}{2\pi}\frac{1}{L}\sum_{j=1}^L\sum_{m=1}^M |C_{m,j}^n e^{-iE_m(k_j)\Delta t/\varepsilon}|^2\\
 & = \frac{R}{2\pi}\frac{1}{L}\sum_{j=1}^L\sum_{m=1}^M |C_{m,j}^n|^2 \\
 & =\frac{R}{2\pi}\frac{1}{L}\sum_{j=1}^L\sum_{m=1}^M \left|\frac{2\pi}{R}\sum_{r=1}^R \tilde{\psi}_{m,j}^n \overline{\varphi_m}(y_r,k_j)\right|^2 \\
 & = \frac{1}{L}\sum_{j=1}^L\sum_{r=1}^R |\tilde{\psi}_{j,r}^n|^2=\frac{1}{L}\sum_{j=1}^L\sum_{r=1}^R \left|\sum_{\ell=1}^L \psi_{\ell,r}^n e^{-i2\pi k_j(\ell-1)}\right|^2 \\
 & = \sum_{\ell=1}^L\sum_{r=1}^R |\psi_{\ell,r}^n|^2=\frac{LR}{2\pi}||\psi^n||_{L^2}^2.
\end{split}
\end{equation}
Here we use the identities
\begin{equation}
\sum_{\ell=1}^L e^{i2\pi(j-k)\ell/L}=\left\{
\begin{aligned}
L, & & (j-k)\mod L=0,\\
0, & & (j-k)\mod L\neq0,
\end{aligned}
\right.
\end{equation}
and
\begin{equation} \label{discrete Bloch eigenfunction orthogonality}
\sum_{r=1}^R \varphi_m(y_r,k_j)\overline{\varphi_n(y_r,k_j)}=\frac{R}{2\pi}\delta_{mn},
\end{equation}
\begin{equation} \label{orthogonality wrt band}
\sum_{m=1}^M \varphi_m(y_r,k_j)\overline{\varphi_m(y_s,k_j)}=\frac{R}{2\pi}\delta_{rs}.
\end{equation}
\eqref{discrete Bloch eigenfunction orthogonality} is the orthogonality of $\varphi_m(y,k)$ w.r.t. $y$ on the discrete level and \eqref{orthogonality wrt band} is due to
\begin{equation} \label{ifft to reconstruct phi}
\varphi_m(y_r,k_j)=\frac{1}{\sqrt{2\pi}}\sum_{\lambda=-R/2}^{R/2-1}\hat{\chi}_m(\lambda,k_j)e^{i(k_j+\lambda)y_r}
\end{equation}
and
\begin{equation}
\sum_{\lambda=-R/2}^{R/2-1} e^{i2\pi(j-k)\lambda/R}=\left\{
\begin{aligned}
R, & & (j-k)\mod R=0\\
0, & & (j-k)\mod R\neq0
\end{aligned}
\right.
\end{equation}
where the multiplier $\frac{1}{\sqrt{2\pi}}$ in \eqref{ifft to reconstruct phi} is the scaling in the preprocessing step to make \eqref{discrete Bloch eigenfunction orthogonality} hold and if $M=R$, $[\hat{\chi}_m(\lambda,k_j)]$ is, for fixed $j$, with $m$ and $\lambda$ being the indices, the matrix of eigenvectors of a Hermite matrix which is unitary since the shifted Hamiltonian $H(k_j)$ is self-adjoint (for more details, please refer to \cite{Huang2007}).
\end{proof}

\section{BD-based stochastic Galerkin method}
\label{sec 3}
In this section, we are about to generalize the Bloch decomposition-based time-splitting method to the Schr\"{o}dinger equation subject to random inputs. We first set up the mathematical formulation of the problem we are considering.

\subsection{Mathematical formulation}
Recall that we focus on the case where electrons move in a periodic potential influenced by a weakly random external field which does not change the periodic lattice structure. In this case, the randomness will be restricted to the external potential. Now the external potential should depend on not only the spatial variable, but also a $d$-dimensional random variable, i.e., $U\equiv U(x,z)$, where $z\in\mathbb{R}^d$ represents some kind of randomness. As a result, the solution is also dependent on the random variable $z$, i.e., $\psi\equiv\psi(t,x,z)$.

So we are now considering the following problem,
\begin{equation} \label{Schrodinger}
\left\{
\begin{aligned}
i\varepsilon\partial_t\psi & = -\frac{\varepsilon^2}{2}\partial_{xx}\psi+V_{\Gamma}(\frac{x}{\varepsilon})\psi+U(x,z)\psi, &  & x\in\mathbb{R},  t\in\mathbb{R}, z\in\Omega\subset\mathbb{R}^d, \\
\psi|_{t=0} & = \psi_{in}(x)
\end{aligned}
\right.
\end{equation}
where it's assumed that $\psi_{in}\in L^2(\mathbb{R})$ and $||\psi_{in}||_{L^2}=1$ without loss of generosity.

\subsection{gPC Galerkin method}
In view of the generalized polynomial chaos expansion, we approximate the external potential and the solution via a truncated orthogonal polynomial series \cite{Xiu2002}. That is, for random variable $z\in\mathbb{R}^d$, we approximate them by
\begin{equation} \label{gPC1}
U(x,z)\approx U_Q(x,z)=\sum^P_{p=1}\hat{U}_p(x)\Phi_p(z),
\end{equation}
\begin{equation} \label{gPC2}
\psi(t,x,z)\approx \psi_Q(t,x,z)=\sum^P_{p=1}\hat{\psi}_p(t,x)\Phi_p(z)
\end{equation}
where $\{\Phi_p(z)\}$ are from $\mathbb{P}^d_Q$, the space of $d$-variate orthogonal polynomials of degree up to $Q\geq1$, and orthonormal:
\begin{equation}
\int\Phi_k(z)\Phi_j(z)d\mu(z)=\delta_{kj},\quad 1\leq k,j\leq P=dim(\mathbb{P}^d_Q)=C^d_{d+Q}.
\end{equation}
Here $\mu(z)$ is the probability distribution of $z$ and $\delta_{kj}$ is the Kronecker delta function. The orthogonality with respect to $\mu(z)$ defines the orthogonal polynomials with respect to the same weight function to achieve faster convergence. For example, Gaussian distribution defines Hermite polynomials; Gamma distribution defines Laguerre polynomials, etc. Note that when the dimension of randomness $d>1$, an ordering scheme for multiple index is needed to re-order the polynomials into a single index $p$. Typically, the graded lexicographic order is used. See Section 5.2 of \cite{Xiu2010}.

Once a basis is chosen, the gPC approximations \eqref{gPC1} and \eqref{gPC2} are inserted into the one-dimensional random Schr\"{o}dinger equation \eqref{Schrodinger}. Then a Galerkin projection procedure can be applied to ensure the residue is orthogonal to $\mathbb{P}^d_Q$. That is, for $p=1,\ldots,P$,
\begin{equation}
\mathbb{E}[i\varepsilon\partial_t\psi_Q\Phi_p]=\mathbb{E}[-\frac{\varepsilon^2}{2}\partial_{xx}\psi_Q\Phi_p]
+\mathbb{E}[V_{\Gamma}(\frac{x}{\varepsilon})\psi_Q\Phi_p]+\mathbb{E}[U_Q(x,z)\psi_Q\Phi_p]
\end{equation}
where $\mathbb{E}$ is the expectation operator. By orthogonality, we have
\begin{equation}
i\varepsilon\partial_t\hat{\psi}_p=-\frac{\varepsilon^2}{2}\partial_{xx}\hat{\psi}_p
+V_{\Gamma}(\frac{x}{\varepsilon})\hat{\psi}_p+\sum^P_{j=1}\sum^P_{q=1}\hat{U}_j\hat{\psi}_qe_{jqp}
\end{equation}
where $e_{jqp}=\mathbb{E}[\Phi_j\Phi_q\Phi_p]$.

Hence, we obtain the following system
\begin{equation} \label{PDE system}
i\varepsilon\partial_t\vec{\psi}=-\frac{\varepsilon^2}{2}\partial_{xx}\vec{\psi}
+V_{\Gamma}(\frac{x}{\varepsilon})\vec{\psi}+A_U\vec{\psi}, \quad x \in \mathbb{R}, \quad t \in \mathbb{R}
\end{equation}
where $\vec{\psi}=(\hat{\psi}_1,\ldots,\hat{\psi}_P)^T$ is the coefficient vector, and $A_U=(a_{pq})_{1\leq p,q\leq P}$ with
\begin{equation}
a_{pq}(x)=\sum^P_{j=1}\hat{U}_j(x)e_{jqp}.
\end{equation}
Obviously, $A_U$ is real and symmetric.

Since the initial condition for \eqref{Schrodinger} is deterministic, the initial conditions for \eqref{PDE system} are
\begin{equation} \label{initial condition for random Schrodinger}
\hat{\psi}_p(0,x)=\left\{
\begin{aligned}
\psi_{in}(x), & & p=1,\\
0, & & p\neq 1.
\end{aligned}
\right.
\end{equation}

\begin{rem}
It's straightforward to generalize the above procedure to the case where randomness is also included in the initial condition, i.e, $\psi_{in}\equiv\psi_{in}(x,z)$, which may be due to the uncertainty of measurement. Then the initial condition for the PDE system \eqref{Schrodinger} changes from \eqref{initial condition for random Schrodinger} to
\begin{equation} \label{initial condition for random Schrodinger 2}
\hat{\psi}_p(0,x)=\int\psi_{in}(x,z)\Phi_p(z)d\mu(z), \quad p=1,\cdots,P.
\end{equation}
The other settings remain the same.
\end{rem}

\subsection{Bloch decomposition-based stochastic Galerkin scheme}
Using gPC expansion, what we want to solve changes from a scalar stochastic PDE \eqref{Schrodinger} to a deterministic PDE system \eqref{PDE system}. Since the periodic lattice potential $V_{\Gamma}$ is not coupled in the system \eqref{PDE system}, it's easy to generalize the Bloch decomposition-based time-splitting scheme to solve it.

As a preprocessing step, we still need to calculate the energy band $E_m(k)$ and eigenfunctions $\varphi_m(y,k)$. In addition, the matrix $A_U(x)$ is also needed. Note that these computations are needed only once as a preparatory step, the numerical costs for them are negligible.

Suppose that at time $t_n$ we are given $\vec{\psi}(t_n,x_{l,r}) \approx \vec{\psi}^n_{l,r}$. Then $\vec{\psi}^{n+1}_{l,r}$ is obtained as follows:

\emph{Step 1.} First, we solve $P$ equations
\begin{equation} \label{step 1 of BDSG}
i\varepsilon\partial_t\hat{\psi}_p=-\frac{\varepsilon^2}{2}\partial_{xx}\hat{\psi}_p
+V_{\Gamma}(\frac{x}{\varepsilon})\hat{\psi}_p, \quad p=1,\ldots,P
\end{equation}
on a fixed time interval $\Delta t$ using Bloch decomposition method. In other words, we call $P$ times the Step 1 \eqref{step 1.1}-\eqref{step 1.5} in the BD-based time-splitting method, with \eqref{initial condition for random Schrodinger} or \eqref{initial condition for random Schrodinger 2} as the initial condition at $t_0$.

\emph{Step 2.} In the second step, we turn to solve the ODE system
\begin{equation} \label{ode system}
i\varepsilon\partial_t\vec{\psi}=A_U\vec{\psi}
\end{equation}
on the same time interval, where the solution obtained in Step 1 serves as the initial condition for Step 2. We can easily obtain the analytic solution for the system, which formally can be represented as
\begin{equation} \label{ode solution}
\vec{\psi}(t,x)=e^{-iA_U(x)t/\varepsilon}\vec{\psi}(0,x).
\end{equation}

\subsection{A classical time-splitting spectral scheme with stochastic collocation\\ method}
Although there's not much numerical literature on this specific problem, there're some sampling methods for uncertainty quantification if a deterministic solver already exists \cite{Xiu2009,Xiu2010}. Here, we introduce the stochastic collocation method with the classical time-splitting spectral method (TS) as the deterministic solver (TS-SC) as a comparison to the BD-SG scheme.

Stochastic collocation, unlike Monte Carlo method, makes use of the polynomial approximation theory to strategically locate the sample nodes to gain accuracy while sampling. In one-dimensional case, to gain high accuracy, the optimal choice is usually the Gauss quadratures. For dimensions larger than 1, two popular approaches are the tensor products of one-dimensional nodal sets and sparse grids \cite{Xiu2005}.

Let $\{z_j\}_{j=1}^{N_{sc}}$ be the set of nodes, where $N_{sc}$ is the total number of nodes. One may then apply the classical time-splitting spectral scheme to \eqref{Schrodinger} for each fixed $z_j$ and obtain solutions $\psi_j(t,x,z_j)$, $j=1,\ldots,N_{sc}$. With the solution ensembles $\{z_j, \psi_j(t,x,z_j)\}$, one then seeks to construct an approximation $\psi(t,x,z)$. Most constructions are linear and give an approximation of the form
\begin{equation}
\psi(t,x,z)=\sum_{j=1}^{N_{sc}}\psi_j(t,x,z_j)\ell_j(z)
\end{equation}
where the form of the function ${\ell_j(z)}$ depends on the construction method. For example, if the Lagrange interpolation is used, $\ell_j(z_k)=\delta_{jk}$. Other construction methods are also used like least-square regression, discrete projection, etc \cite{Xiu2009,Xiu2010}.

We now focus on the Lagrange interpolation. With the approximation constructed, one may be able to study the solution statistics. Take the expectation of the solution for example, it may be evaluated in the following way,
\begin{equation}
\mathbb{E}[\psi]=\sum_{j=1}^{N_{sc}}\psi_j\int\ell_j(z)d\mu(z).
\end{equation}

The time-splitting scheme \cite{Bao2002,Bao2003,Gosse I}, as the deterministic solver, ignores the additional structure provided by the periodic potential $V_{\Gamma}$. For the purpose of comparison, we present this method here.

\emph{Step 1.} In the first step, we solve the equation
\begin{equation}
i\varepsilon\partial_t\psi=-\frac{\epsilon^2}{2}\partial_{xx}\psi
\end{equation}
on a fixed time interval $\Delta t$, using the pseudospectral method.

\emph{Step 2.} In step 2, we solve the ODE
\begin{equation} \label{ts ode system}
i\varepsilon\partial_t\psi=\left(V_{\Gamma}(\frac{x}{\varepsilon})+U(x)\right)\psi
\end{equation}
on the same time interval, where the solution obtained in Step 1 serves as the initial condition for Step 2. It's easy to see that the solution of \eqref{ts ode system} is
\begin{equation}
\psi(t,x)=\psi(0,x)e^{-i(V_{\Gamma}(x/\varepsilon)+U(x))t/\varepsilon}.
\end{equation}

\section{Properties of the algorithm}
\label{sec 4}
\subsection{Conservation of mass}
Before we dig deeper into this topic, we should note that we have introduced randomness into this problem. So we must extend the concept of conservation of mass to stochastic setting \cite{Jin2015}.
\begin{defn} \label{defn of cons of mass}
Let $S$ be a numerical scheme for \eqref{Schrodinger}, which results in a solution $\psi_S(t,x,z)$. We say that $S$ has the property of strong conservation of mass if
\begin{equation} \label{strong sense}
\int_{\mathbb{R}}|\psi_S(t,x,z)|^2dx=\int_{\mathbb{R}}|\psi_S(0,x,z)|^2dx,\quad \forall t\geq0,\quad a.s.\quad z\in\Omega ;
\end{equation}
$S$ has the property of weak conservation of mass if
\begin{equation} \label{weak sense}
\mathbb{E}[\int_{\mathbb{R}}|\psi_S(t,x,z)|^2dx]=\mathbb{E}[\int_{\mathbb{R}}|\psi_S(0,x,z)|^2dx],\quad \forall t\geq0.
\end{equation}
where $\Omega$ is the random domain.
\end{defn}
In Definition~\ref{defn of cons of mass}, the integral in ~\eqref{strong sense} or ~\eqref{weak sense} w.r.t $x$ should be understood to be in the discrete sense.

It's obvious that strong conservation of mass is difficult to achieve as it almost requires the analytical solution of ~\eqref{Schrodinger} over the entire random domain. However, weak conservation of mass is more realistic.

As for the Bloch decomposition-based stochastic Galerkin method, since $\{\Phi_m(z)\}_{m=1}^{\infty}$ are orthogonal, if we take expectation and use Fubini Theorem, we have
\begin{equation}
\mathbb{E}[\int_{\mathbb{R}}|\psi_Q(t,x,z)|^2dx]=\int_{\mathcal{C}}|\vec{\psi}(t,x)|^2dx.
\end{equation}
So weak conservation of mass is equivalent to the conservation of the gPC coefficient vector, i.e.
\begin{equation}
\int_{\mathcal{C}}|\vec{\psi}(t,x)|^2dx=\int_{\mathcal{C}}|\vec{\psi}(0,x)|^2dx,\quad \forall t\geq0.
\end{equation}

We proceed to deduce that our algorithm has the property of weak conservation of mass.

In step 1, each component of $\vec{\psi}$ evolves separately. And by Proposition \ref{prop of 1d conservation}, the $L^2$-norm of each component of $\vec{\psi}$ is preserved during the time evolution if the assumption is satisfied.

In step 2, we solve the ODE system \eqref{ode system} analytically. Note that the coefficient matrix $A_U(x)$ is real and symmetric, which has $P$ real eigenvalues and $P$ orthonormal eigenvectors, the $L^2$-norm of $\vec{\psi}$ is preserved by the form of the analytic solution \eqref{ode solution}.

\begin{thm}
Under the same assumption of Proposition \ref{prop of 1d conservation}, the Bloch decomposition based stochastic Galerkin scheme has the property of weak conservation of mass and hence is unconditionally stable.
\end{thm}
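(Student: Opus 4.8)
The plan is to reduce the statement to the two facts already assembled above: first, that for the gPC--Galerkin discretization weak conservation of mass is \emph{equivalent} to conservation of the discrete $L^2$-norm of the coefficient vector $\vec{\psi}$, and second, that Proposition~\ref{prop of 1d conservation} governs Step~1 componentwise. So I would begin by recalling the identity $\mathbb{E}[\int_{\mathbb{R}}|\psi_Q(t,x,z)|^2\,dx]=\int_{\mathcal{C}}|\vec{\psi}(t,x)|^2\,dx$, which follows from the orthonormality of $\{\Phi_p\}$ together with Fubini, and note that all $x$-integrals are to be read in the discrete quadrature sense $\frac{2\pi}{LR}\sum_{\ell,r}$. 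It then suffices to prove $\int_{\mathcal{C}}|\vec{\psi}^{\,n+1}|^2=\int_{\mathcal{C}}|\vec{\psi}^{\,n}|^2$ on the fully discrete level for every $n$.

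Next I would handle the two splitting sub-steps separately. For Step~1, the point is that \eqref{step 1 of BDSG} is simply $P$ decoupled copies of the scalar periodic Schr\"odinger problem, each advanced by exactly the procedure \eqref{step 1.1}--\eqref{step 1.5}; hence, under the assumption $M=R$, Proposition~\ref{prop of 1d conservation} applies verbatim to each component $\hat{\psi}_p$, giving $\|\hat{\psi}_p^{\,*}\|_{L^2}=\|\hat{\psi}_p^{\,n}\|_{L^2}$ for the intermediate value $\hat{\psi}_p^{\,*}$, and summation over $p=1,\dots,P$ yields $\int_{\mathcal{C}}|\vec{\psi}^{\,*}|^2=\int_{\mathcal{C}}|\vec{\psi}^{\,n}|^2$. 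For Step~2, I would observe that at each fixed grid point the evolution \eqref{ode solution} is multiplication by $e^{-iA_U(x_{\ell,r})\Delta t/\varepsilon}$; since $A_U$ is real and symmetric this matrix exponential is unitary, so $|\vec{\psi}^{\,n+1}(x_{\ell,r})|^2=|\vec{\psi}^{\,*}(x_{\ell,r})|^2$ pointwise in the grid, and summing over $(\ell,r)$ preserves the quadrature sum. Composing the two steps (and, for the Strang-splitting variant, a half Step~2, a full Step~1, a half Step~2) gives the desired identity since each factor is individually norm-preserving. Unconditional stability is then immediate: the one-step solution operator is an isometry in the discrete norm of $\vec{\psi}$ (equivalently in $\mathbb{E}[\|\psi_Q\|_{L^2}^2]^{1/2}$), so $\|\vec{\psi}^{\,n}\|=\|\vec{\psi}^{\,0}\|$ for all $n$ with no restriction on $\Delta t$.

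I do not expect a genuine obstacle here, since both ingredients are in hand; the only points requiring care are (i) being explicit that the relevant notion of mass is the discrete-quadrature one, so that Proposition~\ref{prop of 1d conservation} is invoked on precisely the object for which it was proved, and (ii) checking that the Strang ordering of the sub-steps is harmless — it is, because each factor preserves the norm independently, so the ordering is irrelevant for the conservation identity even though it matters for the temporal order of accuracy.
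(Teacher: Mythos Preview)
Your proposal is correct and follows essentially the same route as the paper: reduce weak conservation of mass to conservation of $\int_{\mathcal{C}}|\vec{\psi}|^2\,dx$ via orthonormality of $\{\Phi_p\}$ and Fubini, then invoke Proposition~\ref{prop of 1d conservation} componentwise for Step~1 and the unitarity of $e^{-iA_U\Delta t/\varepsilon}$ (from $A_U$ real symmetric) for Step~2. Your version is in fact slightly more explicit than the paper's, in that you spell out the discrete-quadrature interpretation and the irrelevance of the Strang ordering, but the argument is the same.
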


\begin{rem}
The Schr\"{o}dinger equation is time reversible. And how $\psi$ evolves in both steps implies that our scheme is also time reversible, i.e., if we change $\Delta t$ into $-\Delta t$, we can reconstruct $\vec{\psi}^n$ with $\vec{\psi}^{n+1}$ as the initial data.
\end{rem}

\subsection{Conservation of energy}
For the one-dimensional Schr\"{o}dinger equation, the local energy density is
\begin{equation}
e(t,x)=\frac{\varepsilon^2}{2}|\partial_x \psi(t,x)|^2+\left(V_{\Gamma}(\frac{x}{\varepsilon})+U(x)\right)|\psi(t,x)|^2.
\end{equation}

In the stochastic setting, the energy density also depends on the random variable $z$, i.e.,
\begin{equation}
e(t,x,z)=\frac{\varepsilon^2}{2}|\partial_x \psi(t,x,z)|^2+\left(V_{\Gamma}(\frac{x}{\varepsilon})+U(x,z)\right)|\psi(t,x,z)|^2.
\end{equation}
Analogous to Definition~\ref{defn of cons of mass}, conservation of energy in the strong sense means
\begin{equation}
\int_{\mathbb{R}}e(t,x,z)dx=\int_{\mathbb{R}}e(0,x,z)dx,\quad \forall t\geq0,\quad a.s.\quad z\in\Omega ;
\end{equation}
conservation of energy in the weak sense means
\begin{equation}
\mathbb{E}[\int_{\mathbb{R}}e(t,x,z)dx]=\mathbb{E}[\int_{\mathbb{R}}e(0,x,z)dx],\quad \forall t\geq0.
\end{equation}
Again, we mainly focus on the weak conservation of energy. For our BD-SG scheme, take expectation and use Fubini's Theorem, the Hamiltonian or the energy that is desired to be conserved is
\begin{equation} \label{conservation of energy}
H(t)=\int_{\mathcal{C}}\frac{\varepsilon^2}{2}|\partial_x \vec{\psi}(t,x)|^2+V_{\Gamma}(\frac{x}{\varepsilon})|\vec{\psi}(t,x)|^2+\vec{\psi}(t,x)^HA_U(x)\vec{\psi}(t,x)dx,
\end{equation}
where $A_U$ is defined as before and the integral should also be considered on the discrete level.

Up to now, we still couldn't prove the weak conservation of energy for our scheme analytically. But our numerical results support this property.

\subsection{Numerical errors and Numerical costs}
Let us first consider truncation error. There're two types of truncation errors since we use gPC expansion up to a finite order and finitely many Bloch bands in the Bloch decomposition. According to the Cameron-Martin theorem \cite{Cameron1947}, the Fourier-Hermite series converge to any $L^2$ functional in the $L^2$ sense. Moreover, it's shown numerically by Xiu et al \cite{Xiu2002} that an appropriate choice of orthogonal polynomials can lead to exponential convergence of gPC expansions. What's more, the numerical experiments in \cite{Huang2007} show that the mass concentration in each Bloch band decays rapidly as $m\rightarrow +\infty$ if $\psi_{in}$ is smooth and thus only a few Bloch bands are needed to ensure sufficient accuracy, which indicates Bloch decomposition also achieves exponential convergence rate. So we just need use a few terms of both expansions since the temporal discretization error will eventually be dominant.

Certainly, the time-splitting scheme \eqref{step 1 of BDSG}-\eqref{ode system} is first order in time. But if we adapt the Strang's splitting procedure, we could obtain a second order scheme. This generalized algorithm still computes the dominant effects from dispersion and the periodic lattice potential in one step, maintaining the strong interaction, and treats the weak non-periodic random potential as a perturbation. Because the split-step error between the periodic and non-periodic parts is relatively small, our algorithm still preserves the advantage of allowing a relatively larger time step size \cite{Huang2007}.

It has been shown in \cite{Huang2007} that the complexity of Step 1 in the BD scheme is $O(MLR\log(R)) \lor O(RL\log(L))$ and that of Step 2 is $O(RL)$, which are comparable to that of the classical time-splitting scheme. It's obvious that the complexity of the BD-SG scheme is $O(PMLR\log(R)) \lor O(PRL\log(L)) \lor O(P^2RL)$. Numerical results will show that a moderate $P$ will be sufficient if the magnitude of randomness is not too large, which is consistent with the problem we're considering. So the complexity of the BD-SG scheme should be $O(PMLR\log(R)) \lor O(PRL\log(L))$ since $P\ll L \lor R$ regardless of the scale of $\varepsilon$ and note that $M=R$ if we want to ensure the weak conservation of mass. The above discussion indicates that the BD-SG scheme has a huge advantage in efficiency since the sampling methods combined with a deterministic solver, e.g., the classical time-splitting method, usually need a large number of samplings and thus a large number of calls of the deterministic solver.

\begin{rem}
The BD-SG algorithm \eqref{step 1 of BDSG}, \eqref{ode system} can be implemented in parallelization w.r.t. $P$ in a very natural way. Hence, given a moderate number of processors, the numerical cost on each processor would be comparable to the serial case where $P=1$. And the total computational time can be reduced dramatically.
\end{rem}

\section{Numerical experiments}
\label{sec 5}
In this section, we will present several numerical examples to illustrate the efficiency of our algorithm. For simplicity, we shall always assume a one-dimensional random variable $z$ obeying the uniform distribution on $[-1,1]$, and thus the Legendre polynomial chaos are adopted as the gPC basis. Multi-dimensional random variables can be handled in a similar and straightforward way. Typically, to examine the accuracy of numerical solutions, reference solutions are used, which are computed using very fine spatial grids, small time steps via the classical time-splitting method and the high-order stochastic collocation method.  And all the following experiments are conducted on a PC with 2.60 GHz CPU. To quantify the difference the difference between the numerical solution obtained via BD-SG, $\psi^{BG}(t,x,z)$, and the 'exact' solution, $\psi^{ex}(t,x,z)$, the following two metrics are introduced:
\begin{equation}
\Delta_{mean}^{BG}(t)=||\mathbb{E}[\psi^{ex}(t,\cdot,\cdot)]-\mathbb{E}[\psi^{BG}(t,\cdot,\cdot)]||_{L^2},
\end{equation}
\begin{equation}
\Delta_{den}^{BG}(t)=||\sqrt{\mathbb{E}[|\psi^{ex}(t,\cdot,\cdot)|^2]}-\sqrt{\mathbb{E}[|\psi^{BG}(t,\cdot,\cdot)|^2]}||_{L^2}.
\end{equation}
The former is the difference in the sense of mean, while the latter is in the sense of mean density.

We shall choose for \eqref{Schrodinger} the initial data $\psi_{in}\in L^2(\mathbb{R})$ of the form
\begin{equation}
\psi_{in}(x)=\left(\frac{10}{\pi}\right)^{1/4}e^{-5(x-\pi)^2}.
\end{equation}

For our numerical simulation below, we shall mainly use the following two types of periodic potentials, the Mathieu's model
\begin{equation} \label{mathieu}
V_{\Gamma}(x)=\cos(x)+1
\end{equation}
and the Kronig-Penny's model given by
\begin{equation} \label{KP}
V_{\Gamma}(x)=1=\sum_{\gamma\in\mathbb{Z}}\textbf{1}_{x\in [\frac{\pi}{2}+2\pi\gamma,\frac{3\pi}{2}+2\pi\gamma]}.
\end{equation}

With the above setting, we should turn to the random external potentials, namely, the following three types,
\begin{equation} \label{harmonic}
U(x)=|x-\pi|^2+0.5\big(z\cos(2x)+1\big),
\end{equation}
which is a harmonic potential with a weak noise, and then a non-smooth potential
\begin{equation} \label{step external}
U(x)=\textbf{1}_{x\in[\frac{\pi}{2},\frac{3\pi}{2}]}+2\frac{z+1}{x+1}.
\end{equation}
and
\begin{equation} \label{linear}
U(x,z)=(1+0.1z)x
\end{equation}
modelling a random (electric) force field.

In the first series of numerical experiments, we shall consider the convergence test w.r.t. the temporal and spatial discretization for our scheme. The results are given in Tables \ref{table 1}-\ref{table 4}.

\begin{table}[H]
\small
\caption{Convergence test w.r.t. $t$ for $V_{\Gamma}$ given by \eqref{mathieu}, $U$ given by \eqref{harmonic} at $T$}
\label{table 1}
\centering
\subtable[\scriptsize{$\varepsilon=1/4,T=1,\Delta x=\pi/128$ and gPC order of 4}]{
\begin{tabular}{|c|c|c|c|c|c|}
\hline
$\Delta t$ & 1/2 & 1/4 & 1/8 & 1/16 &1/32 \\ \hline
$\Delta_{mean}^{BG}$ & 1.36E-01 & 3.14E-02 & 7.70E-03 & 1.91E-03 & 4.78E-04 \\ \hline
convergence order & & 2.1 & 2.0 & 2.0 & 2.0 \\ \hline
$\Delta_{den}^{BG}$ & 1.16E-01 & 2.63E-02 & 6.42E-03 & 1.60E-03 & 3.99E-04 \\ \hline
convergence order & & 2.1 & 2.0 & 2.0 & 2.1 \\ \hline
\end{tabular}
}
\\
\subtable[\scriptsize{$\varepsilon=1/64,T=0.2,\Delta x=\pi/512$ and gPC order of 8}]{
\begin{tabular}{|c|c|c|c|c|c|}
\hline
$\Delta t$ & 1/10 & 1/20 & 1/40 & 1/80 &1/160 \\ \hline
$\Delta_{mean}^{BG}$ & 1.26E-01 & 1.53E-03 & 2.50E-04 & 6.22E-05 & 1.55E-05 \\ \hline
convergence order & & 3.0 & 2.6 & 2.0 & 2.0 \\ \hline
$\Delta_{den}^{BG}$ & 2.22E-02 & 1.97E-03 & 3.79E-04 & 9.40E-05 & 2.33E-05 \\ \hline
convergence order & & 3.5 & 2.4 & 2.0 & 2.0 \\ \hline
\end{tabular}
}
\\
\subtable[\scriptsize{$\varepsilon=1/512,T=0.02,\Delta x=\pi/16384$ and gPC order of 8}]{
\begin{tabular}{|c|c|c|c|c|c|}
\hline
$\Delta t$ & 1/50 & 1/100 & 1/200 & 1/400 &1/800 \\ \hline
$\Delta_{mean}^{BG}$ & 3.30E-03 & 1.03E-03 & 1.44E-04 & 3.13E-05 & 7.76E-06 \\ \hline
convergence order & & 1.7 & 2.8 & 2.2 & 2.0 \\ \hline
$\Delta_{den}^{BG}$ & 8.13E-03 & 2.65E-03 & 2.34E-04 & 5.68E-05 & 1.40E-05 \\ \hline
convergence order & & 1.6 & 3.5 & 2.0 & 2.0 \\ \hline
\end{tabular}
}
\end{table}

\begin{table}[H]
\small
\caption{Convergence test w.r.t. $t$ for $V_{\Gamma}$ given by \eqref{KP}, $U$ given by \eqref{harmonic} at $T$}
\label{table 2}
\centering
\subtable[\scriptsize{$\varepsilon=1/4,T=1,\Delta x=\pi/256$ and gPC order of 8}]{
\begin{tabular}{|c|c|c|c|c|c|}
\hline
$\Delta t$ & 1 & 1/2 & 1/4 & 1/8 &1/16 \\ \hline
$\Delta_{mean}^{BG}$ & 5.72E-01 & 1.05E-01 & 2.58E-02 & 6.32E-03 & 1.74E-03 \\ \hline
convergence order & & 2.4 & 2.0 & 2.0 & 1.9 \\ \hline
$\Delta_{den}^{BG}$ & 3.42E-01 & 6.88E-02 & 1.65E-02 & 4.06E-03 & 1.03E-03 \\ \hline
convergence order & & 2.3 & 2.1 & 2.0 & 2.0 \\ \hline
\end{tabular}
}
\\
\subtable[\scriptsize{$\varepsilon=1/16,T=0.5,\Delta x=\pi/512$ and gPC order of 8}]{
\begin{tabular}{|c|c|c|c|c|c|}
\hline
$\Delta t$ & 1/2 & 1/4 & 1/8 & 1/16 &1/32 \\ \hline
$\Delta_{mean}^{BG}$ & 1.21E-01 & 3.08E-02 & 7.02E-03 & 1.96E-03 & 5.79E-04 \\ \hline
convergence order & & 2.0 & 2.1 & 1.8 & 1.8 \\ \hline
$\Delta_{den}^{BG}$ & 1.54E-01 & 3.27E-02 & 6.90E-03 & 1.72E-03 & 4.51E-04 \\ \hline
convergence order & & 2.2 & 2.2 & 2.0 & 1.9 \\ \hline
\end{tabular}
}
\end{table}

\begin{table}[H]
\small
\caption{Convergence test w.r.t. $x$ for $V_{\Gamma}$ given by \eqref{mathieu}, $U$ given by \eqref{step external} at $T$}
\label{table 3}
\centering
\subtable[\scriptsize{$\varepsilon=1/512,T=0.02,\Delta t=1/500$ and gPC order of 8}]{
\begin{tabular}{|c|c|c|c|c|c|}
\hline
$\Delta x$ & $\pi/512$ & $\pi/1024$ & $\pi/2048$ & $\pi/4096$ \\ \hline
$\Delta_{mean}^{BG}$ & 2.45E+01 & 4.21E-01 & 4.72E-03 & 3.29E-06 \\ \hline
convergence order & & 5.9 & 6.5 & 10.5 \\ \hline
$\Delta_{den}^{BG}$ & 1.28E+02 & 1.99E+00 & 1.95E-02 & 9.51E-06 \\ \hline
convergence order & & 6.0 & 6.7 & 11.0 \\ \hline
\end{tabular}
}
\\
\subtable[\scriptsize{$\varepsilon=1/1024,T=0.01,\Delta t=1/1000$ and gPC order of 8}]{
\begin{tabular}{|c|c|c|c|c|c|}
\hline
$\Delta x$ & $\pi/1024$ & $\pi/2048$ & $\pi/4096$ & $\pi/8192$ \\ \hline
$\Delta_{mean}^{BG}$ & 2.45E+01 & 4.12E-01 & 4.83E-03 & 4.24E-06 \\ \hline
convergence order & & 5.9 & 6.4 & 10.2 \\ \hline
$\Delta_{den}^{BG}$ & 1.28E+02 & 2.01E+00 & 2.00E-02 & 5.58E-06 \\ \hline
convergence order & & 6.0 & 6.6 & 11.8 \\ \hline
\end{tabular}
}
\end{table}

\begin{table}[H]
\small
\caption{Convergence test w.r.t. $x$ for $V_{\Gamma}$ given by \eqref{KP}, $U$ given by \eqref{step external} at $T$}
\label{table 4}
\centering
\subtable[\scriptsize{$\varepsilon=1/64,T=0.05,\Delta t=1/200$ and gPC order of 8}]{
\begin{tabular}{|c|c|c|c|c|c|}
\hline
$\Delta x$ & $\pi/64$ & $\pi/128$ & $\pi/256$ & $\pi/512$ \\ \hline
$\Delta_{mean}^{BG}$ & 7.58E+00 & 4.71E+00 & 3.57E-01 & 1.82E-02 \\ \hline
convergence order & & 0.7 & 3.7 & 4.3 \\ \hline
$\Delta_{den}^{BG}$ & 1.16E+01 & 6.56E+00 & 6.17E-01 & 3.67E-02 \\ \hline
convergence order & & 0.8 & 3.4 & 4.1 \\ \hline
\end{tabular}
}
\\
\subtable[\scriptsize{$\varepsilon=1/1024,T=0.01,\Delta t=1/600$ and gPC order of 8}]{
\begin{tabular}{|c|c|c|c|c|c|}
\hline
$\Delta x$ & $\pi/1024$ & $\pi/2048$ & $\pi/4096$ & $\pi/8192$ \\ \hline
$\Delta_{mean}^{BG}$ & 6.07E-01 & 4.80E-01 & 5.81E-02 & 1.98E-03 \\ \hline
convergence order & & 0.3 & 3.0 & 4.9 \\ \hline
$\Delta_{den}^{BG}$ & 3.00E+00 & 1.90E+00 & 1.88E-01 & 5.49E-03 \\ \hline
convergence order & & 0.7 & 3.3 & 5.1 \\ \hline
\end{tabular}
}
\end{table}

The results in Tables \ref{table 1} and \ref{table 2} show that the BD-SG scheme is second order in time. And the results in Tables \ref{table 3} and \ref{table 4} show that the BD-SG scheme exhibits exponential convergence in space in both cases of periodic potentials while in the case of a smooth periodic potential, the convergence rate in space is even faster.

In Figures \ref{Fig. 1} and \ref{Fig. 2}, results of convergence test w.r.t. gPC order are shown. We can see that both errors will saturate quickly at a certain gPC order as the temporal or spatial discretization error would become dominant. And before saturation, the fast exponential convergence w.r.t. the order of gPC expansion can be observed. Therefore, our BD-SG scheme just has to utilize a few gPC order to obtain an accurate solution and has the advantage of lower computation costs.

\begin{figure}[H]
\centering
\subfigure[\scriptsize{$\varepsilon=1/4,T=1,\Delta t=0.01,\Delta x=\pi/128$}]{
\includegraphics[width=0.6\textwidth]{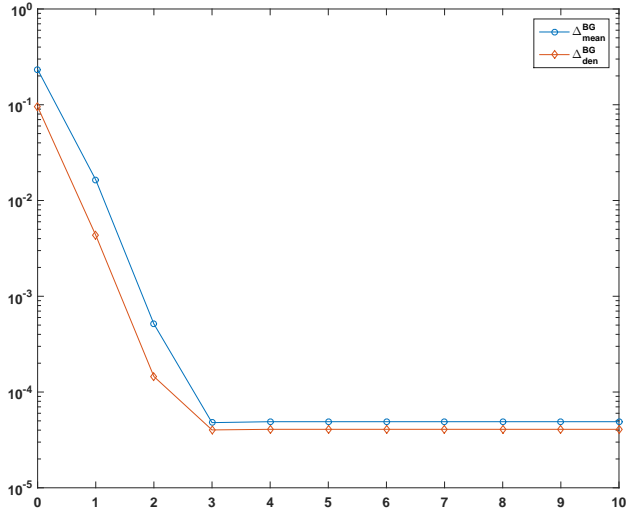}}
\subfigure[\scriptsize{$\varepsilon=1/256,T=0.05,\Delta t=0.0005,\Delta x=\pi/4096$}]{
\includegraphics[width=0.6\textwidth]{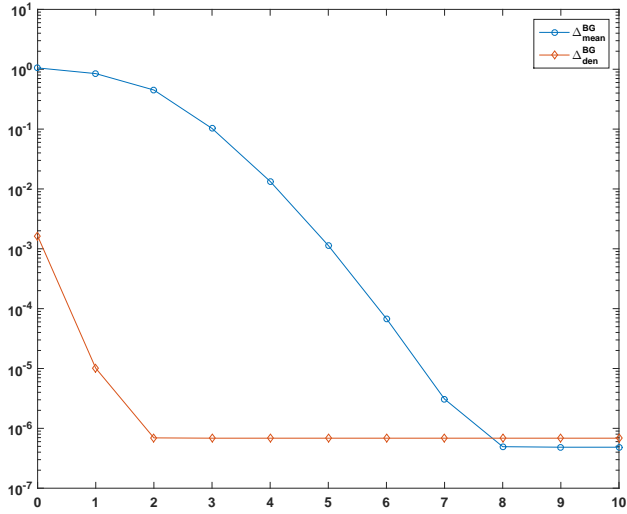}}
\caption{Convergence test w.r.t. gPC order for $V_{\Gamma}$ given by \eqref{mathieu}, $U$ given by \eqref{harmonic} at $T$}
\label{Fig. 1}
\end{figure}

\begin{figure}[H]
\centering
\subfigure[\scriptsize{$\varepsilon=1/4,T=1,\Delta t=0.1,\Delta x=\pi/256$}]{
\includegraphics[width=0.6\textwidth]{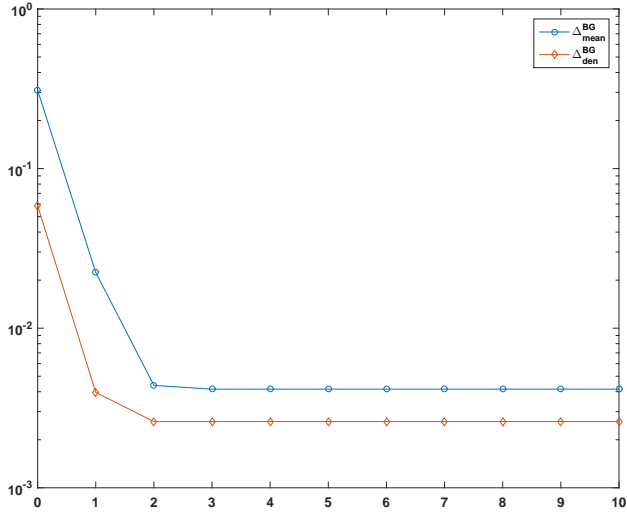}}
\subfigure[\scriptsize{$\varepsilon=1/64,T=0.22,\Delta t=0.0025,\Delta x=\pi/4096$}]{
\includegraphics[width=0.6\textwidth]{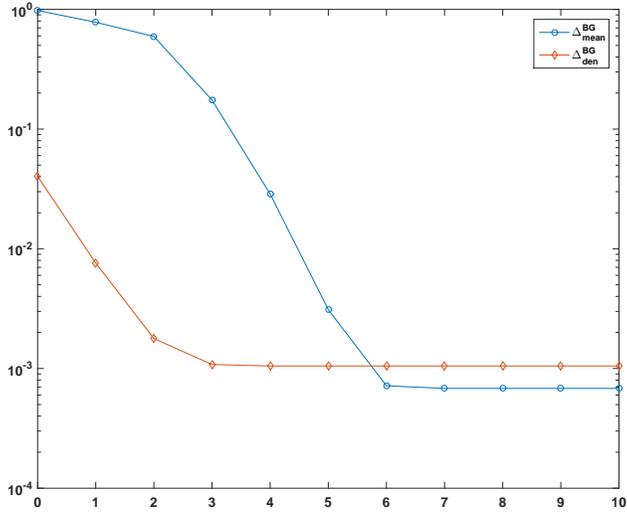}}
\caption{Convergence test w.r.t. gPC order for $V_{\Gamma}$ given by \eqref{KP}, $U$ given by \eqref{harmonic} at $T$}
\label{Fig. 2}
\end{figure}

Another natural idea to solve \eqref{Schrodinger} is to combine the classical time-splitting method and Monte Carlo method (TS-MC). In general, the convergence rate of Monte Carlo method is pretty slow at $O(1/\sqrt{K})$, where $K$ is the number of realizations. So we expect that the BD-SG scheme works better than TS-MC. We apply both methods to the example of $V_{\Gamma}$ given by \eqref{mathieu} and $U$ given by \eqref{linear} with $\varepsilon=\frac{1}{4}$. We define $\Delta^{TM}_{mean}$ and $\Delta^{TM}_{den}$ accordingly. The results are shown in Figure \ref{Fig. 3} and Table \ref{table 5}.

\begin{figure}[H]
\centering
\subfigure[\scriptsize{$|\psi^{ex}(T,x)|^2$}]{
\includegraphics[width=0.3\textwidth]{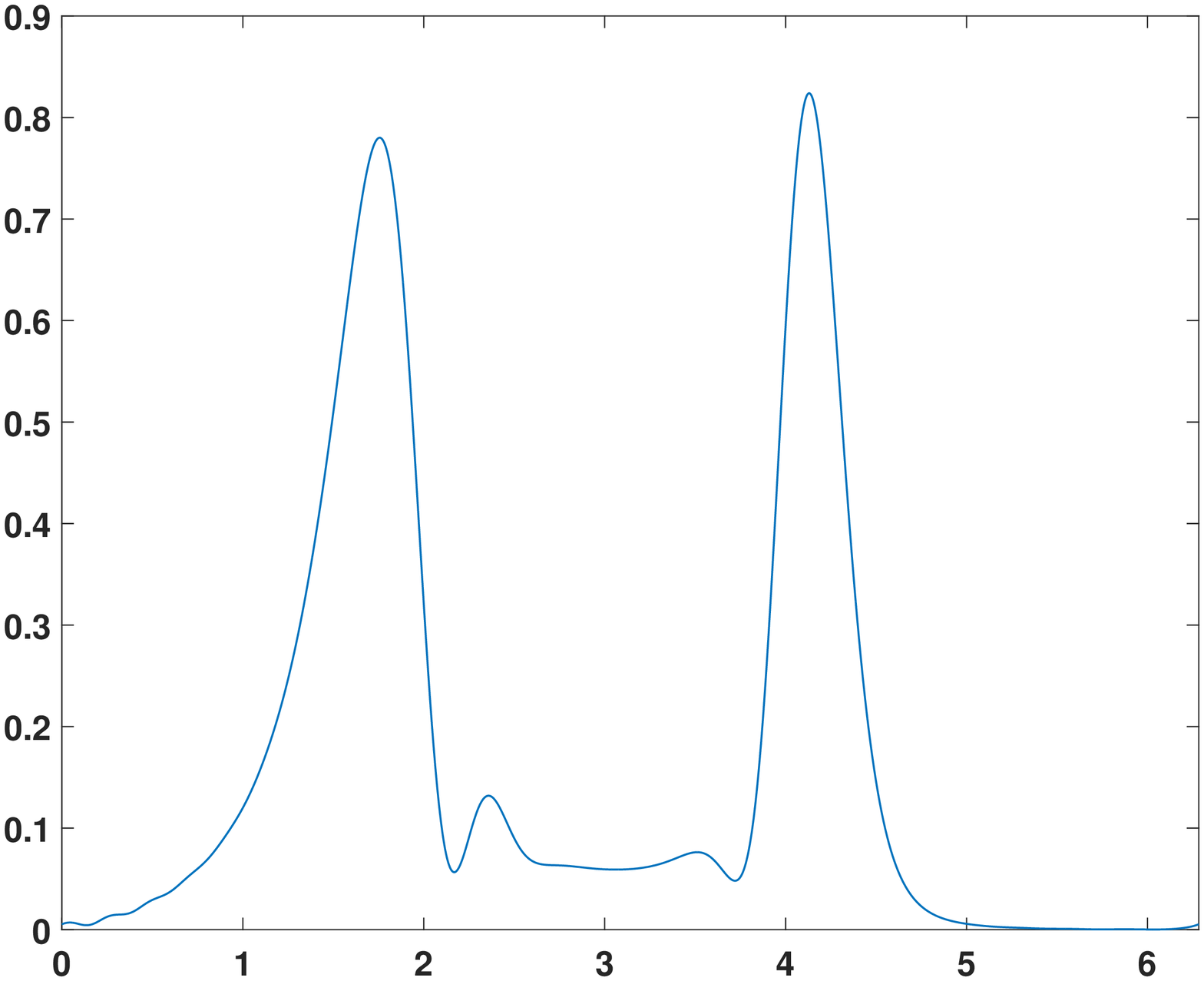}}
\subfigure[\scriptsize{$|\mathbb{E}[\psi^{ex}-\psi^{BG}]|$}]{
\includegraphics[width=0.3\textwidth]{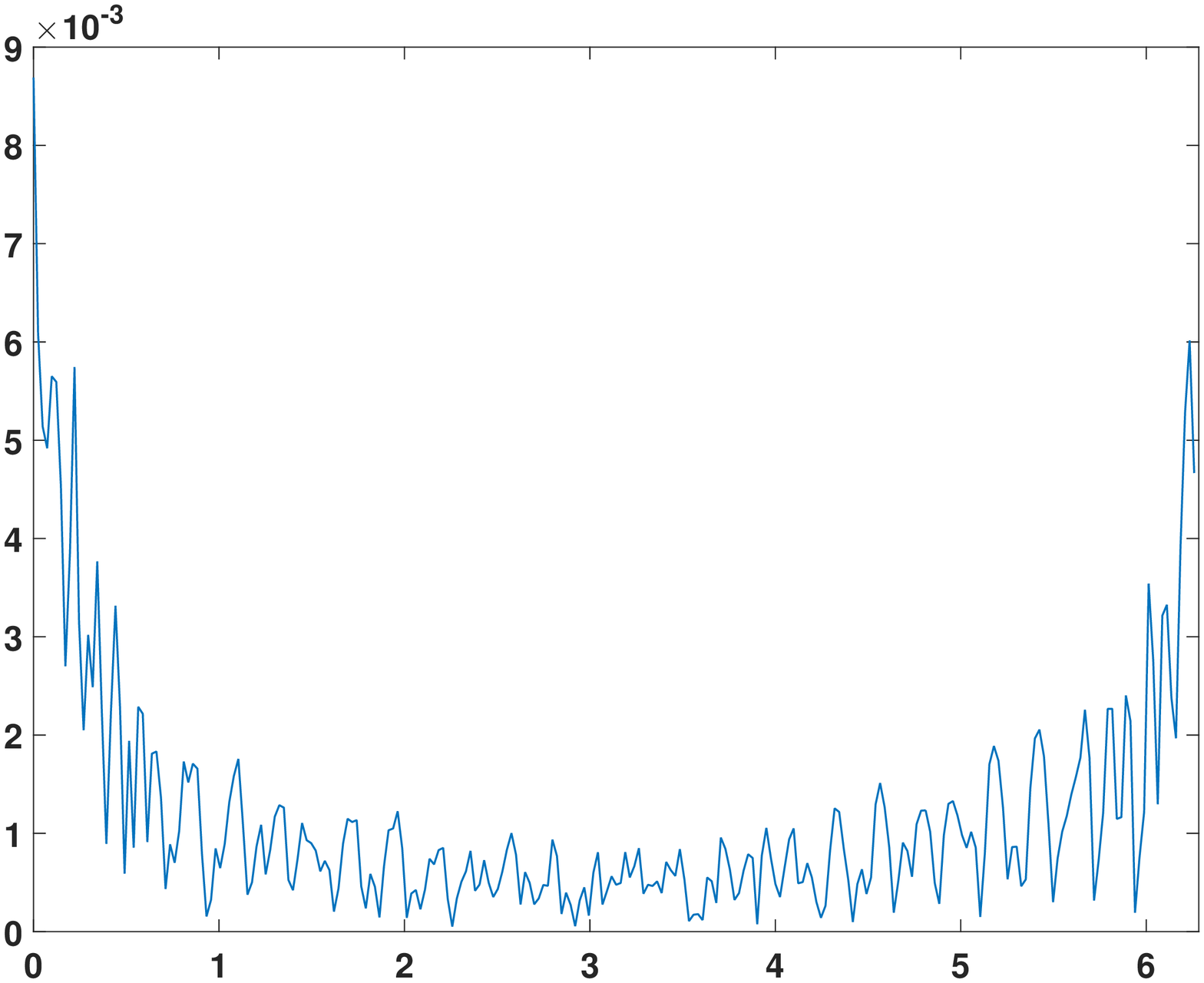}}
\subfigure[\scriptsize{$|\mathbb{E}[\psi^{ex}-\psi^{TM}]|$}]{
\includegraphics[width=0.3\textwidth]{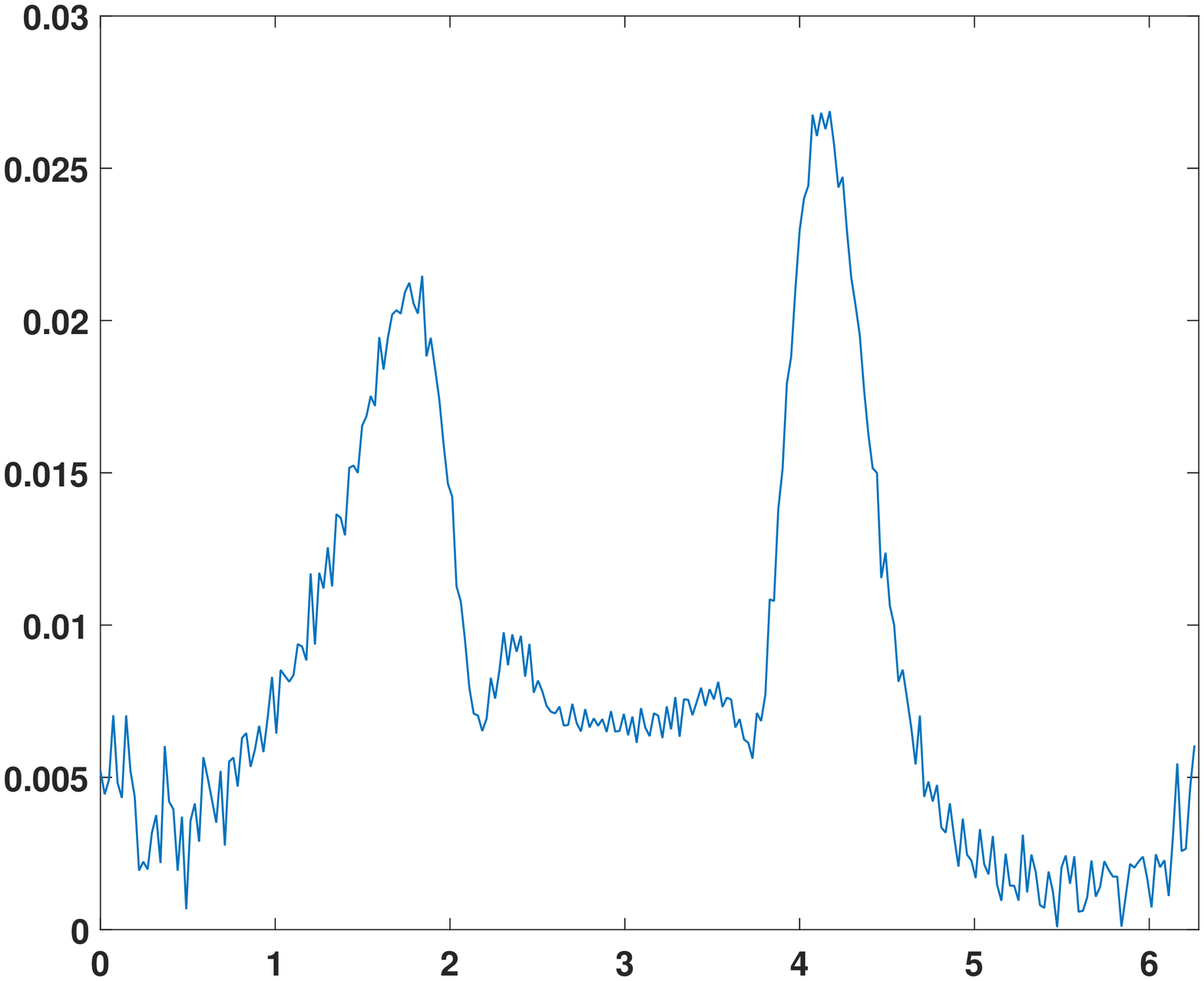}}
\caption{$V_{\Gamma}$ given by \eqref{mathieu}, $U$ given by \eqref{linear} at $T=1$ with $\varepsilon=\frac{1}{4}$. $\Delta t=1/100,\Delta x=\pi/128$ for both methods. BD-SG uses gPC order of 4 while TS-MC uses 1,000 realizations}
\label{Fig. 3}
\end{figure}

\begin{table}[H]
\small
\centering
\caption{TS-MC for $V_{\Gamma}$ given by \eqref{mathieu} and $U$ given by \eqref{linear} with $\varepsilon=1/4,T=1,\Delta t=1/100,\Delta x=\pi/128$ and $K$ realizations while $\Delta^{BG}_{mean}=4.25E-03$ and $\Delta^{BG}_{den}=2.88E-03$ for BD-SG with the same temporal and spatial discretization and gPC order of 4.}
\label{table 5}
\begin{tabular}{|c|c|c|c|c|}
\hline
$K$ & 10 & 100 & 1,000 & 10,000 \\ \hline
$\Delta^{TM}_{mean}$ & 1.36E-01 & 4.61E-02 & 1.14E-02 & 8.91E-03 \\ \hline
$\Delta^{TM}_{den}$ & 5.87E-03 & 2.93E-03 & 2.84E-03 & 2.84E-03 \\ \hline
\end{tabular}
\end{table}

Compare the results computed by TS-MC with that computed by BD-SG, it's clear that, with the same temporal and spatial discretization, the TS-MC needs around 1,000 realizations to achieve the same mean density error level as BD-SG scheme with gPC order of 4 and more than 10,000 realizations to achieve the same mean error level. With the same temporal and spatial discretization, the CPU time is around 3 seconds for BD-SG using gPC order of 4 in ignorance of the preparatory step while it's 5 seconds for TS-MC using 1,000 realizations and 51 seconds for TS-MC using 10,000 realizations.  Thus, it's easy to see that BD-SG works much better than TS-MC.

\begin{rem}
In \cite{Huang2007}, it's numerically shown that BD method works much better than TS method in the case where $V_{\Gamma}$ is non-smooth and/or $\varepsilon\ll 1$. So we may expect that the BD-SG method works much better than TS-MC method in those cases.
\end{rem}

Next, we want to compare the BD-SG method and the classical time-splitting method with stochastic collocation method (TS-SC). Since the stochastic collocation method makes use of the polynomial approximation theory, it also works better than the Monte Carlo method. We apply BD-SG and TS-SC to 2 cases. Again, $\Delta^{TC}_{mean}$ and $\Delta^{TC}_{den}$ are defined accordingly. The results are shown in Figures \ref{Fig. 4}, \ref{Fig. 5} and Tables \ref{table 6}, \ref{table 7}.

\begin{figure}[H]
\centering
\subfigure[\scriptsize{$|\psi^{ex}(T,x)|^2$}]{
\includegraphics[width=0.3\textwidth]{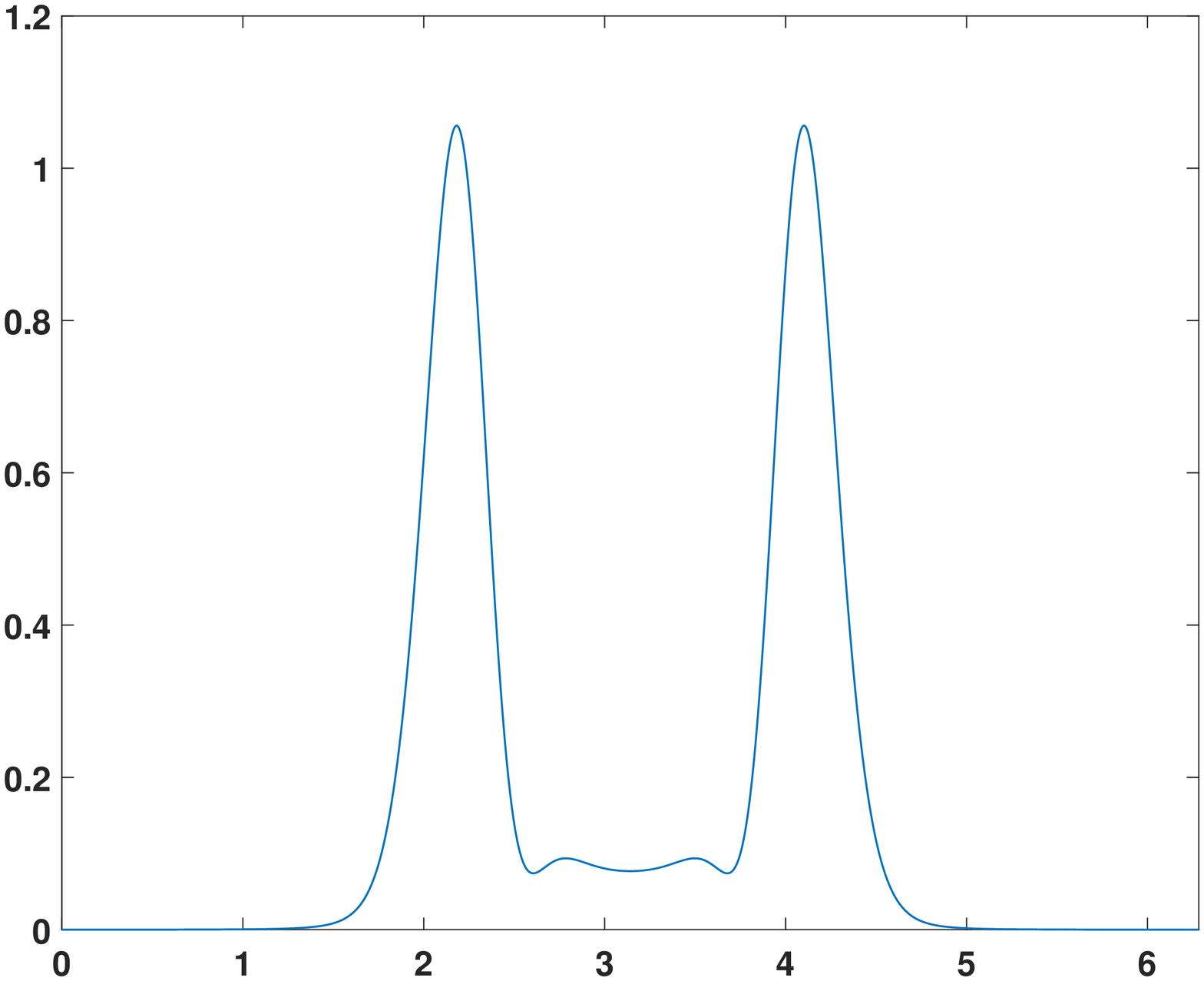}}
\subfigure[\scriptsize{$|\mathbb{E}[\psi^{ex}-\psi^{BG}]|$}]{
\includegraphics[width=0.3\textwidth]{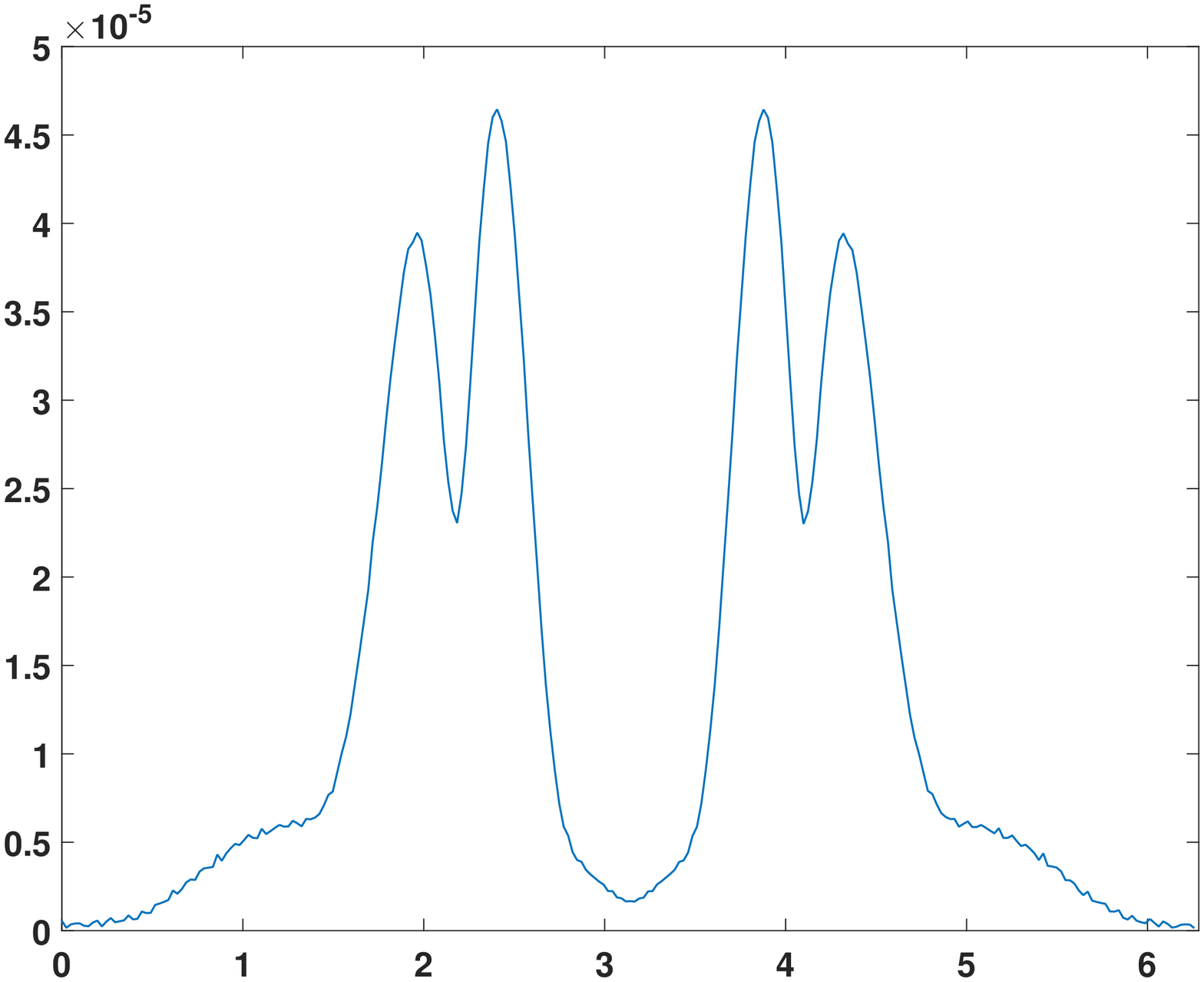}}
\subfigure[\scriptsize{$|\mathbb{E}[\psi^{ex}-\psi^{TC}]|$}]{
\includegraphics[width=0.3\textwidth]{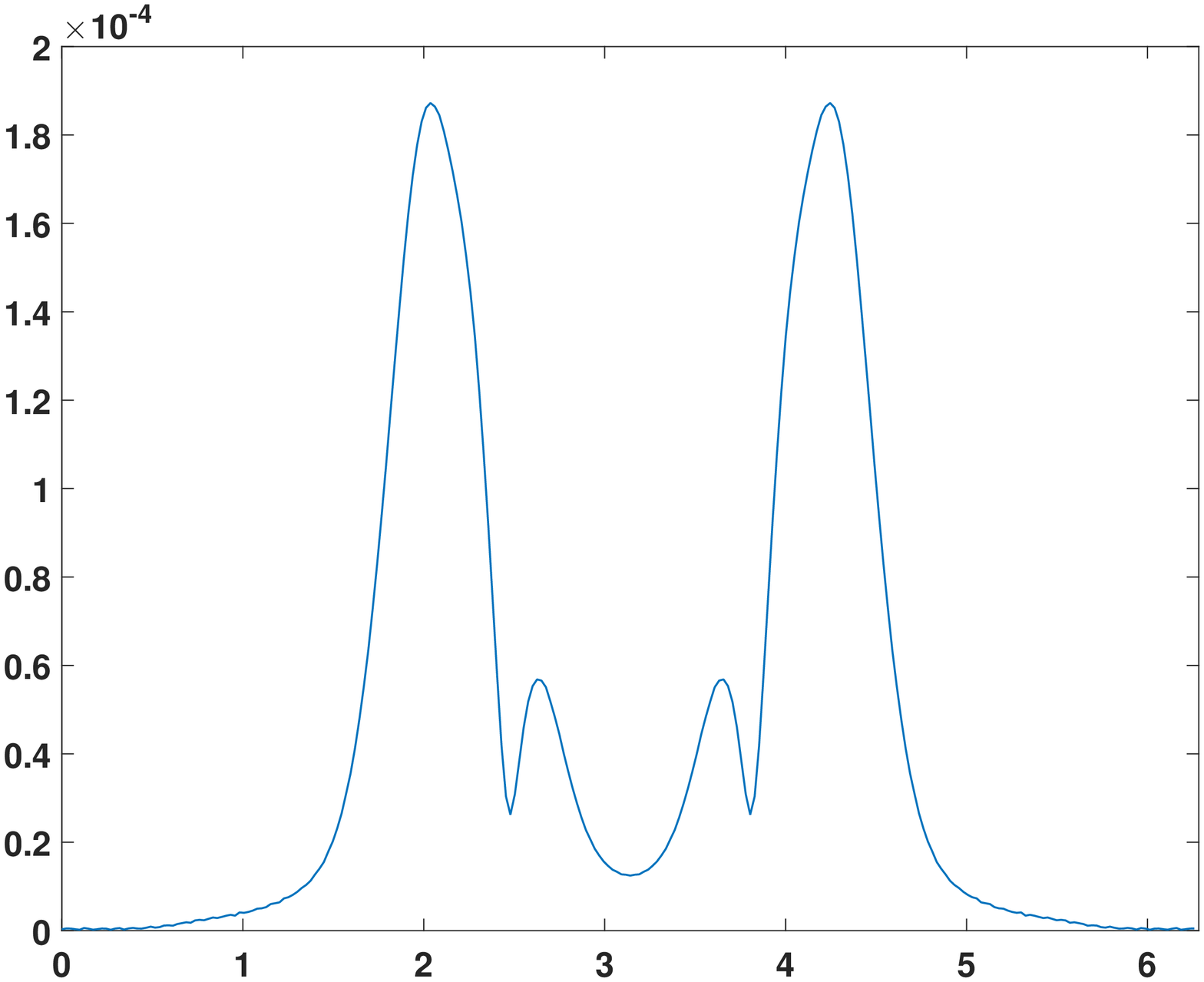}}
\caption{$V_{\Gamma}$ given by \eqref{mathieu}, $U$ given by \eqref{harmonic} at $T=1$ with $\varepsilon=\frac{1}{4}$. $\Delta t=1/100,\Delta x=\pi/128$ for both methods. BD-SG uses gPC order of 4 while TS-SC uses 5 quadrature nodes}
\label{Fig. 4}
\end{figure}

\begin{table}[H]
\small
\caption{Comparison between BD-SG and TS-SC for $V_{\Gamma}$ given by \eqref{mathieu}, $U$ given by \eqref{harmonic} at $T$}
\label{table 6}
\centering
\subtable[\scriptsize{BD-SG}]{
\begin{tabular}{|c|c|c|c|c|c|c|}
  \hline
  % after \\: \hline or \cline{col1-col2} \cline{col3-col4} ...
  $\varepsilon$ & $T$ & $\Delta t$ & $\Delta x$ & gPC order & $\Delta_{mean}^{BG}$ & $\Delta_{den}^{BG}$ \\ \hline
  1/4 & 1 & 0.01 & $\pi/128$ & 4 & 4.90E-05 & 4.08E-05 \\ \hline
  1/1024 & 0.01 & 0.001 & $\pi/8192$ & 4 & 1.96E-03 & 1.83E-05 \\
  \hline
\end{tabular}
}
\\
\subtable[\scriptsize{TS-SC}]{
\begin{tabular}{|c|c|c|c|c|c|c|}
  \hline
  % after \\: \hline or \cline{col1-col2} \cline{col3-col4} ...
  $\varepsilon$ & $T$ & $\Delta t$ & $\Delta x$ & number of nodes & $\Delta_{mean}^{TC}$ & $\Delta_{den}^{TC}$ \\ \hline
  1/4 & 1 & 0.01 & $\pi/128$ & 5 & 1.82E-04 & 8.80E-05 \\ \hline
  1/1024 & 0.01 & 0.000004 & $\pi/8192$ & 5 & 1.96E-03 & 4.40E-05 \\
  \hline
\end{tabular}
}
\end{table}

\begin{figure}[H]
\centering
\subfigure[\scriptsize{$|\psi^{ex}(T,x)|^2$}]{
\includegraphics[width=0.3\textwidth]{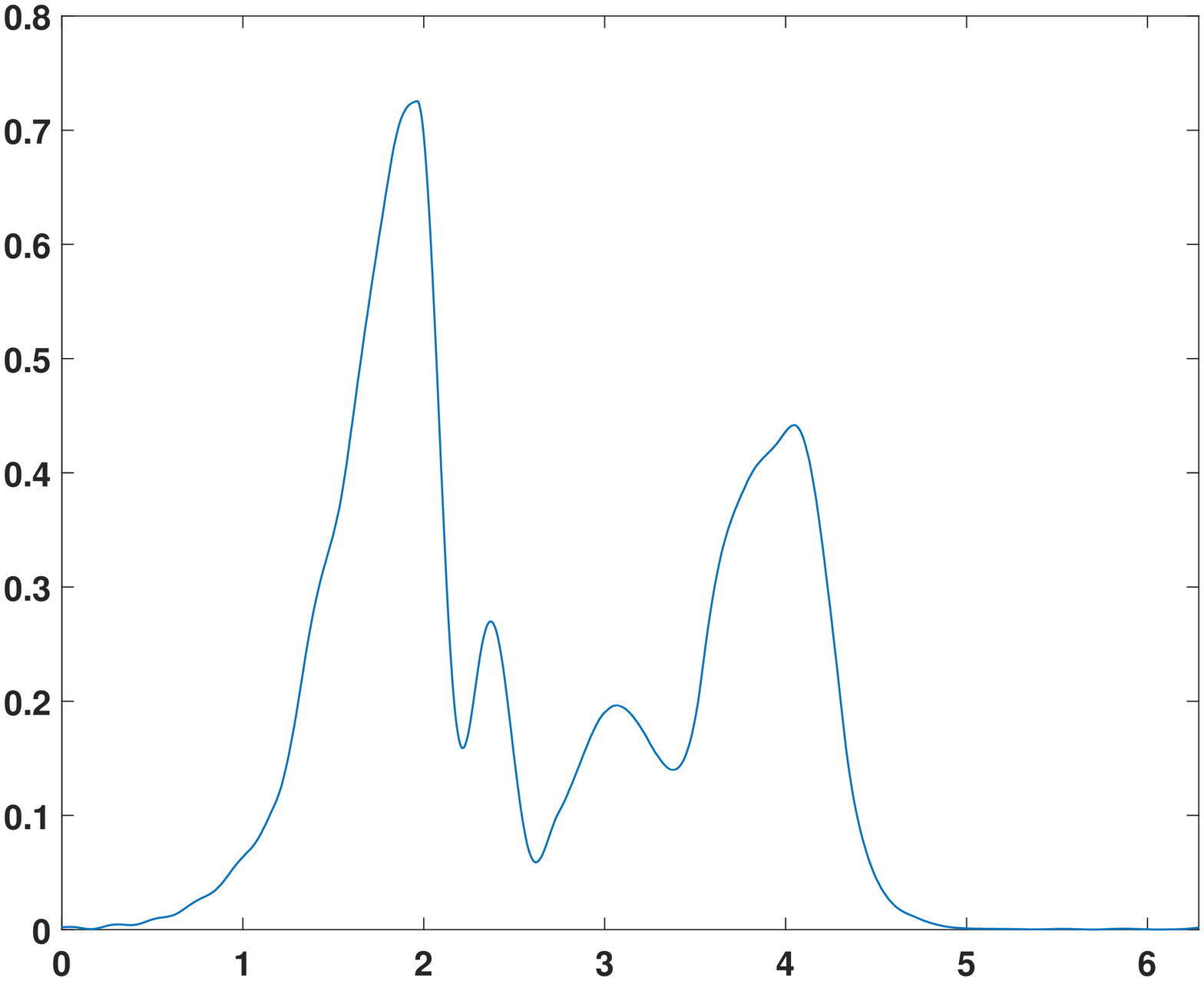}}
\subfigure[\scriptsize{$|\mathbb{E}[\psi^{ex}-\psi^{BG}]|$}]{
\includegraphics[width=0.3\textwidth]{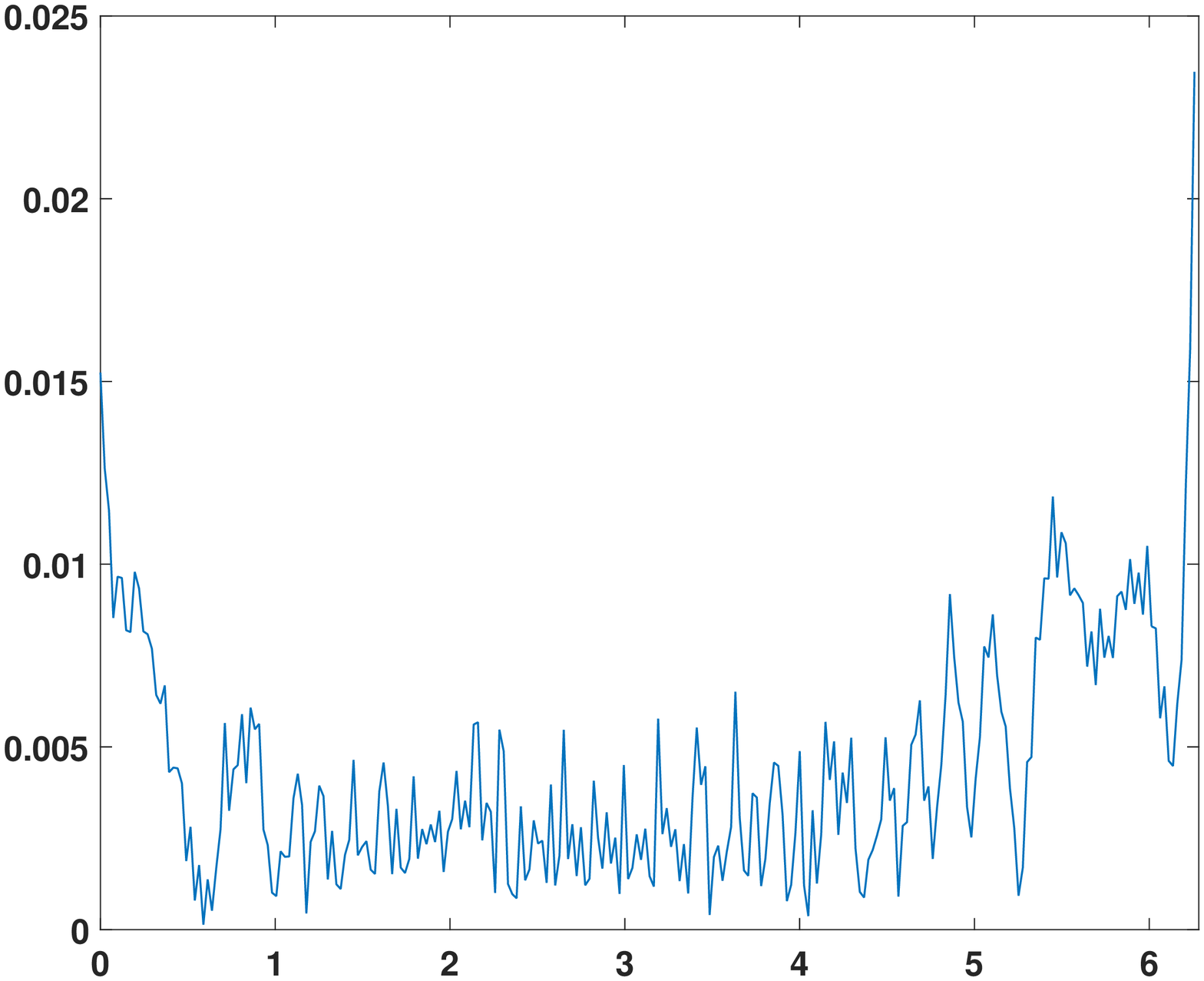}}
\subfigure[\scriptsize{$|\mathbb{E}[\psi^{ex}-\psi^{TC}]|$}]{
\includegraphics[width=0.3\textwidth]{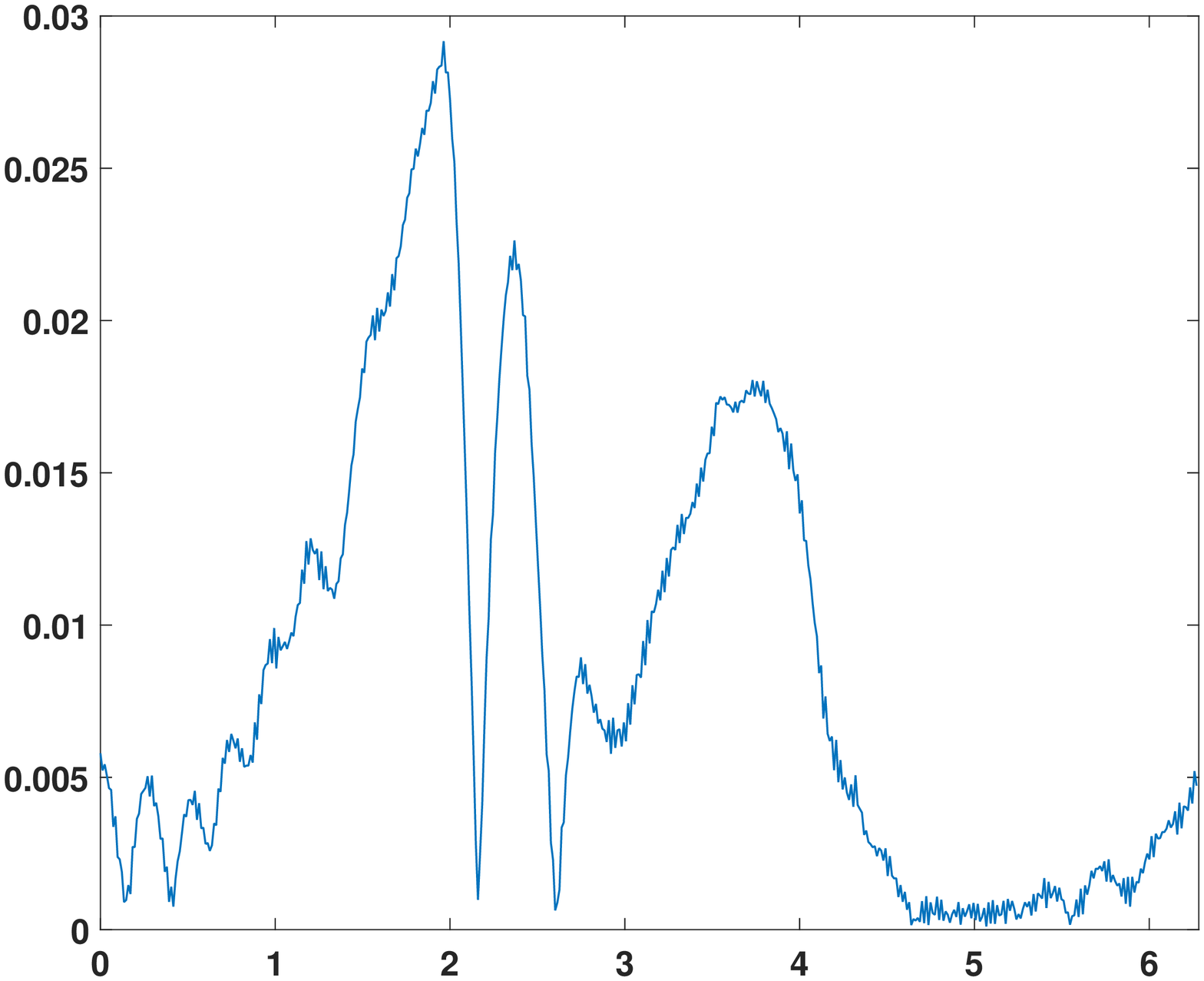}}
\caption{$V_{\Gamma}$ given by \eqref{KP}, $U$ given by \eqref{linear} at $T=1$ with $\varepsilon=\frac{1}{4}$. $\Delta t=1/10,\Delta x=\pi/128$ for BD-SG and $\Delta t=1/1000,\Delta x=\pi/256$ for BD-SG. BD-SG uses gPC order of 4 while TS-SC uses 5 quadrature nodes}
\label{Fig. 5}
\end{figure}

\begin{table}[H]
\small
\caption{Comparison between BD-SG and TS-SC for $V_{\Gamma}$ given by \eqref{KP}, $U$ given by \eqref{linear} at $T$}
\label{table 7}
\centering
\subtable[\scriptsize{BD-SG}]{
\begin{tabular}{|c|c|c|c|c|c|c|}
  \hline
  % after \\: \hline or \cline{col1-col2} \cline{col3-col4} ...
  $\varepsilon$ & $T$ & $\Delta t$ & $\Delta x$ & gPC order & $\Delta_{mean}^{BG}$ & $\Delta_{den}^{BG}$ \\ \hline
  1/4 & 1 & 0.1 & $\pi/128$ & 4 & 1.35E-02 & 9.03E-03 \\ \hline
  1/1024 & 0.01 & 0.001 & $\pi/8192$ & 4 & 3.44E-03 & 1.74E-02 \\
  \hline
\end{tabular}
}
\\
\subtable[\scriptsize{TS-SC}]{
\begin{tabular}{|c|c|c|c|c|c|c|}
  \hline
  % after \\: \hline or \cline{col1-col2} \cline{col3-col4} ...
  $\varepsilon$ & $T$ & $\Delta t$ & $\Delta x$ & number of nodes & $\Delta_{mean}^{TC}$ & $\Delta_{den}^{TC}$ \\ \hline
  1/4 & 1 & 0.001 & $\pi/256$ & 5 & 2.83E-02 & 1.17E-02 \\ \hline
  1/1024 & 0.01 & 0.00005 & $\pi/32768$ & 5 & 3.69E-03 & 2.20E-02 \\
  \hline
\end{tabular}
}
\end{table}

Comparison between BD-SG and TS-SC tells us that when $V_{\Gamma}$ is smooth, these two methods give almost the same result when $\varepsilon=O(1)$ but if $\varepsilon\ll 1$, the BD-SG method can achieve an accurate solution with $\Delta t=O(1)$, $\Delta x=O(\varepsilon)$ while the TS-SC needs finer time steps to get a solution at the same error level. When $V_{\Gamma}$ is non-smooth, the BD-SG achieves better results than TS-SC even if $\varepsilon=O(1)$ and as $\varepsilon$ gets smaller, the advantage is more prominent. Thus, the BD-SG method still preserves the advantage of allowing relatively larger time step size when $V_{\Gamma}$ is non-smooth and/or $\varepsilon \ll 1$. The result is consistent with that given in \cite{Huang2007}.

\begin{rem}
As for the disposal against randomness, it's stated in \cite{Xiu2009} that the exact cost comparison between gPC Galerkin and stochastic collocation depends on many factors, a large number of which are unknown, but it's fair to say that the gPC Galerkin should be preferred if the coupling of gPC Galerkin does not incur much additional computational cost or efficient solvers can be developed to decouple the gPC system, especially for the case where the dimension of random space is high. And in \cite{Back2011}, it's shown that the numerical performances of these two methods are comparable. So we don't present examples of direct comparison between these two method.
\end{rem}

\begin{rem}
Apparently, the finite difference method can be chosen as the deterministic solve for the Schr\"{o}dinger equation. But in many references, e.g. \cite{Bao2002}, it has been shown that the classical time-splitting method works much better than the finite difference method. So we don't generalize the finite difference method to the stochastic setting as a comparison with our BD-SG method.
\end{rem}

Thus, by comparing the BD-SG method with some other methods, it can be concluded that the combination of the Bloch decomposition and the stochastic Galerkin method is an appropriate choice for the Schr\"{o}dinger equation with a periodic potential and a random external potential.

\subsection{Conservation quantities}
Recall that the total mass and energy are two of the most important conservation quantities for the Schr\"{o}dinger equation. And we have discussed the conservation of mass and energy in the weak sense in Section \ref{sec 4}. We prove analytically that our BD-SG scheme enjoys the property of weak conservation of mass. In this subsection, we will numerically show that the BD-SG scheme enjoys both properties of weak conservation of mass and energy. We define
\begin{equation}
M(t)=\mathbb{E}[\int_{\mathbb{R}}|\psi(t,x,z)|^2dx]
\end{equation}
and recall that in our case, the energy we want to preserve is given by \eqref{conservation of energy}. Figures \ref{Fig. 6} and \ref{Fig. 7} show that the performance of conservation of mass is better than that of energy. But the errors of both quantities are acceptable.

\begin{figure}[H]
\centering
\subfigure[\scriptsize{$M(t)/M(0)$}]{
\includegraphics[width=0.4\textwidth]{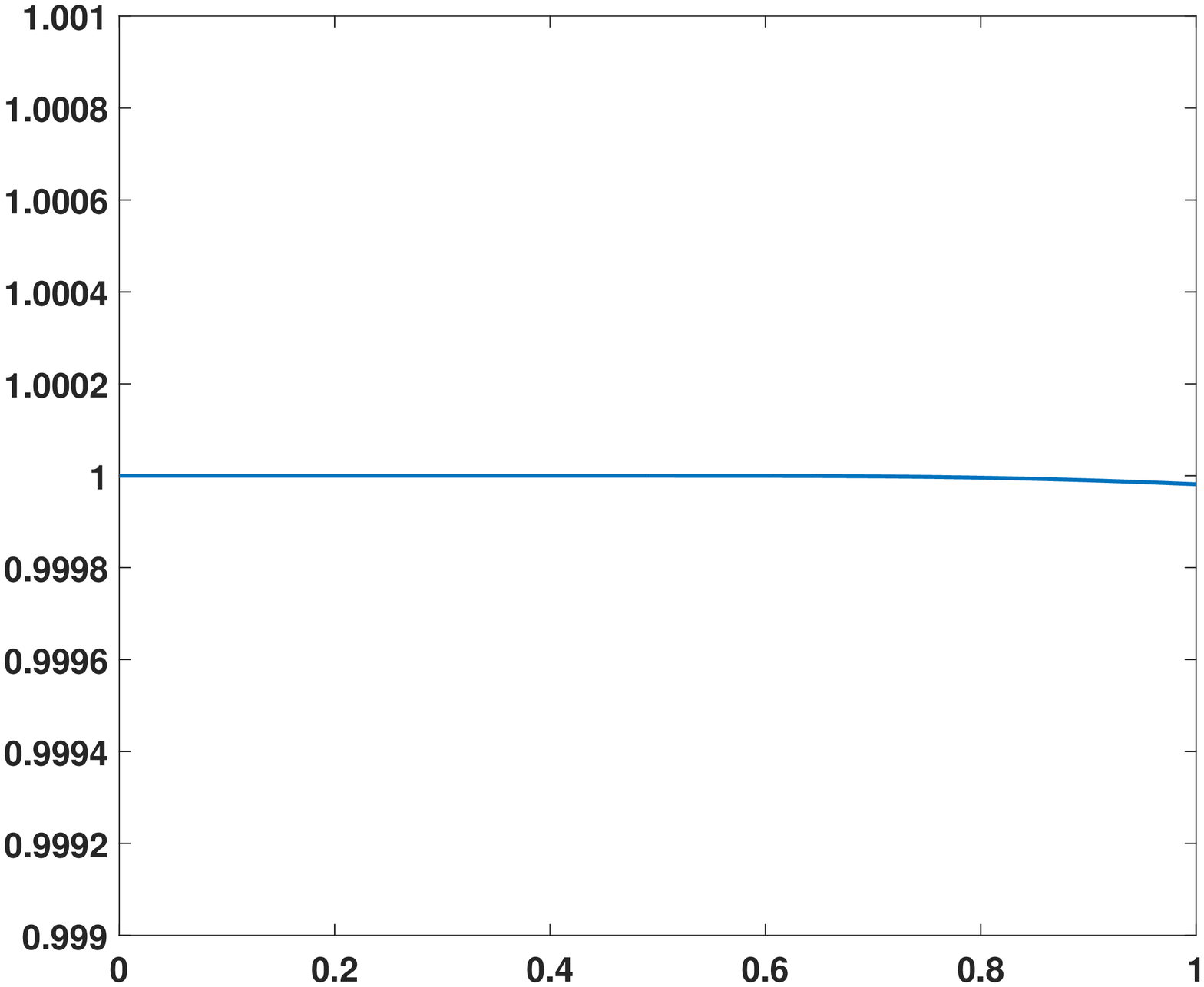}}
\subfigure[\scriptsize{$H(t)/H(0)$}]{
\includegraphics[width=0.4\textwidth]{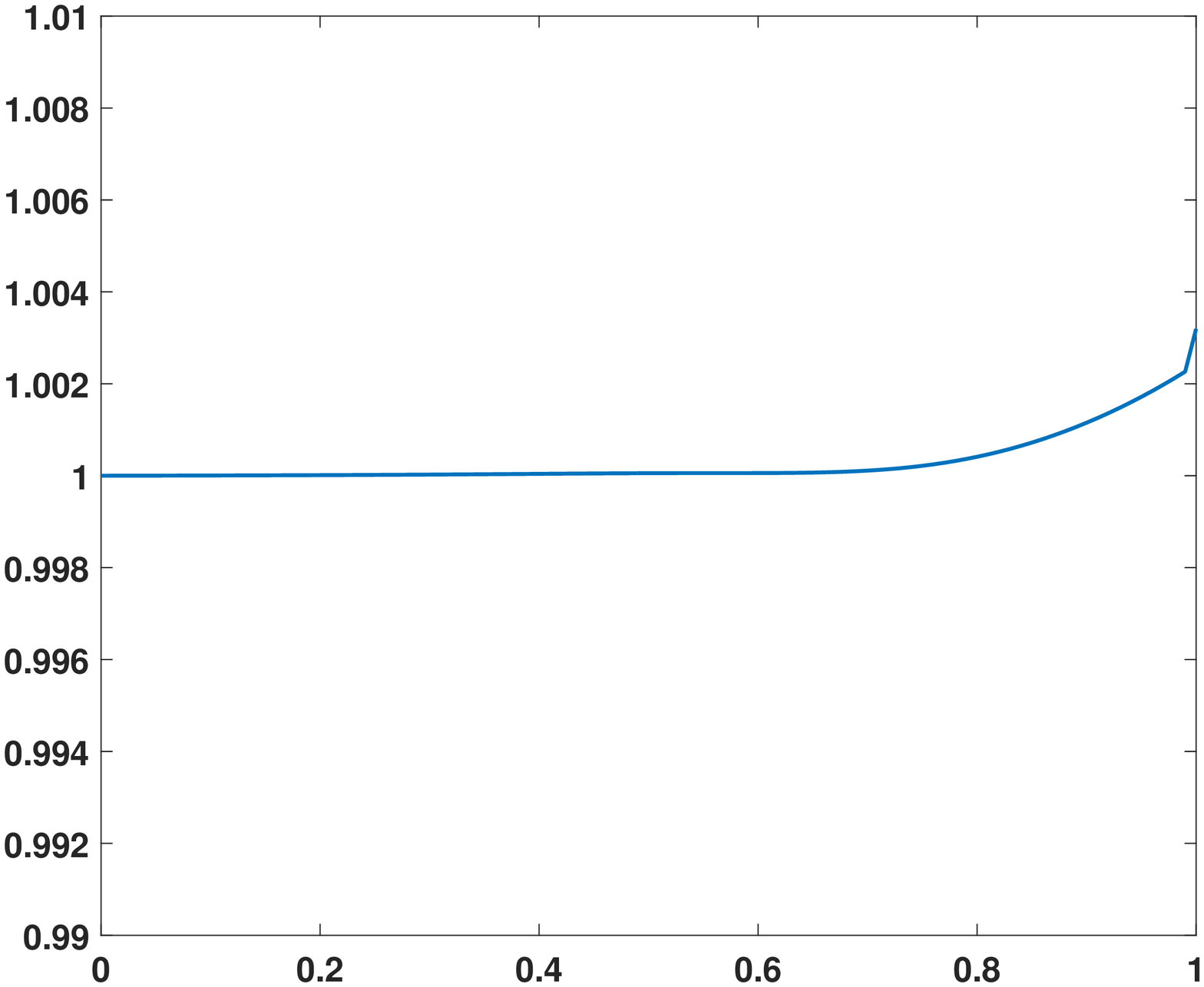}}
\caption{$V_{\Gamma}$ given by \eqref{mathieu}, $U$ given by \eqref{step external} and $\varepsilon=1/4$}
\label{Fig. 6}
\end{figure}

\begin{figure}[H]
\centering
\subfigure[\scriptsize{$M(t)/M(0)$}]{
\includegraphics[width=0.4\textwidth]{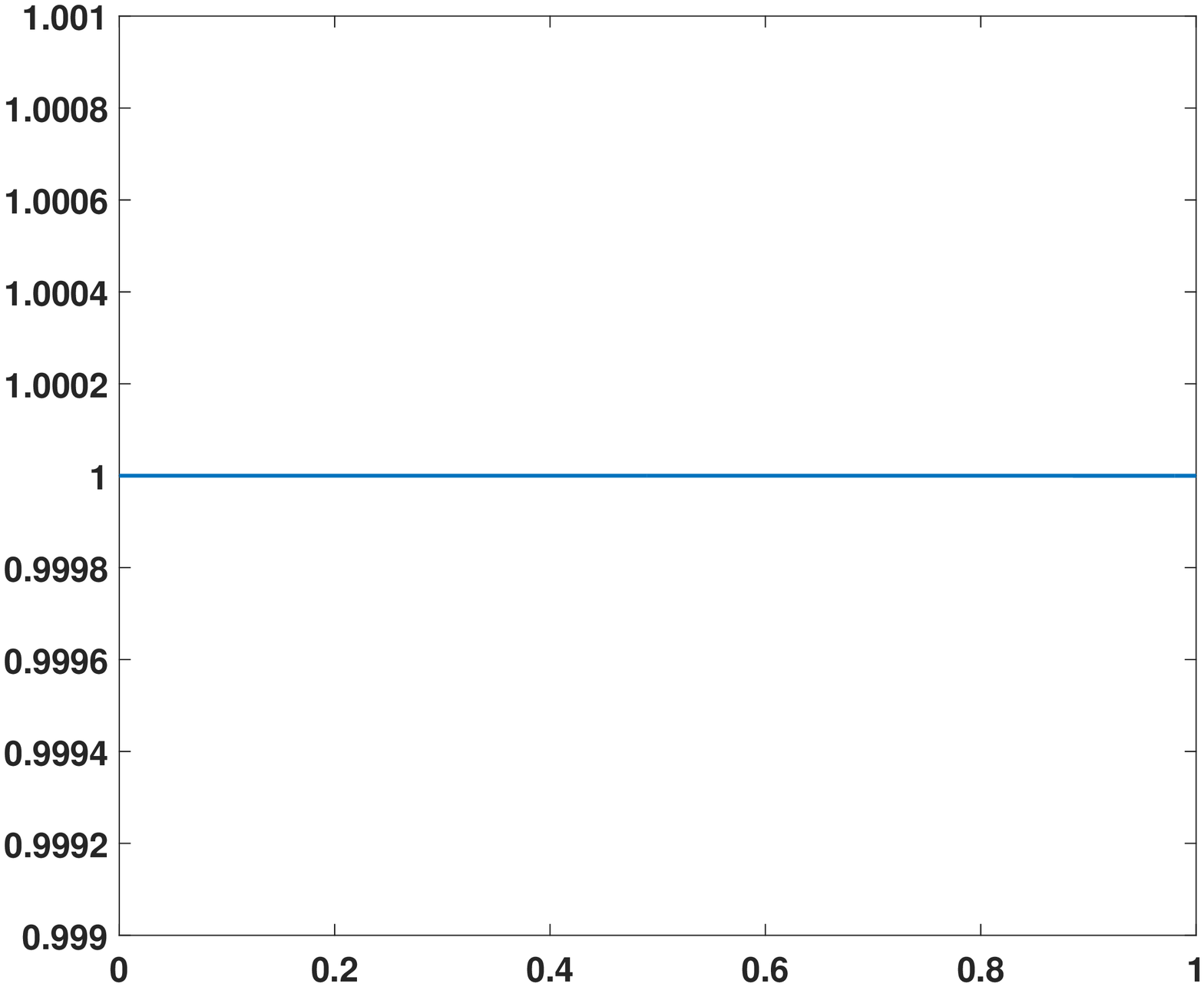}}
\subfigure[\scriptsize{$H(t)/H(0)$}]{
\includegraphics[width=0.4\textwidth]{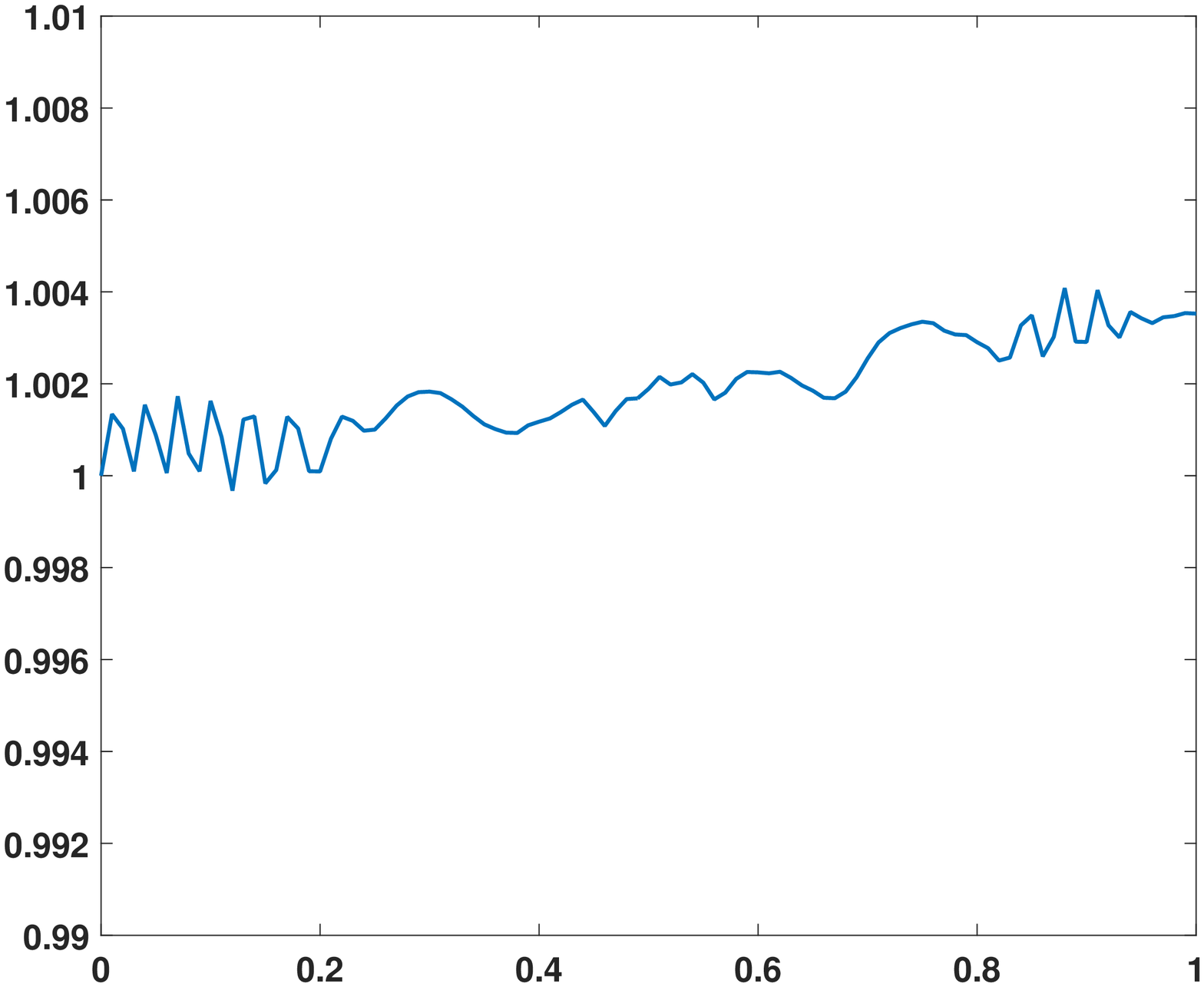}}
\caption{$V_{\Gamma}$ given by \eqref{KP}, $U$ given by \eqref{linear} and $\varepsilon=1/1024$}
\label{Fig. 7}
\end{figure}

\subsection{Numerical evidence for Anderson localization}
The phenomenon of Anderson localization describes the absence of dispersion for waves in random media with sufficiently strong random perturbations, which has been studied extensively.

To observe this phenomenon, we may consider a relatively weak periodic potential
\begin{equation}
V_{\Gamma}(x)=0.5+0.5\cos(x)
\end{equation}
and the random potential
\begin{equation}
U(x)=\sigma|z|\cos(x)
\end{equation}
%\begin{equation}
%U(x)=\sigma|z|(x-\pi)^2
%\end{equation}
where $\sigma$ represents the magnitude of randomness.

To measure the presence of Anderson localization, we consider the expectation of the second spatial moment of the position density, i.e.,
\begin{equation}
S(t)=\mathbb{E}[\int_{\mathbb{R}}|x|^2|\psi(t,x,z)|^2dx]
\end{equation}
It measures the spreading of the particle density. If the particles are localized, $S(t)$ should grow slowly and eventually become a constant in time.

Figure \ref{Fig. 8} shows the mean density of solutions with $\varepsilon=\frac{1}{4}$ at $T=1.5$ for different $\sigma$'s. And Figure \ref{Fig. 9} shows the temporal behavior of $S(t)$. From both figures, the phenomenon of Anderson localization is observed and our BD-SG can successfully capture this phenomenon.

\begin{figure}[H]
\centering
\subfigure[\scriptsize{$\sigma=0$}]{
\includegraphics[width=0.3\textwidth]{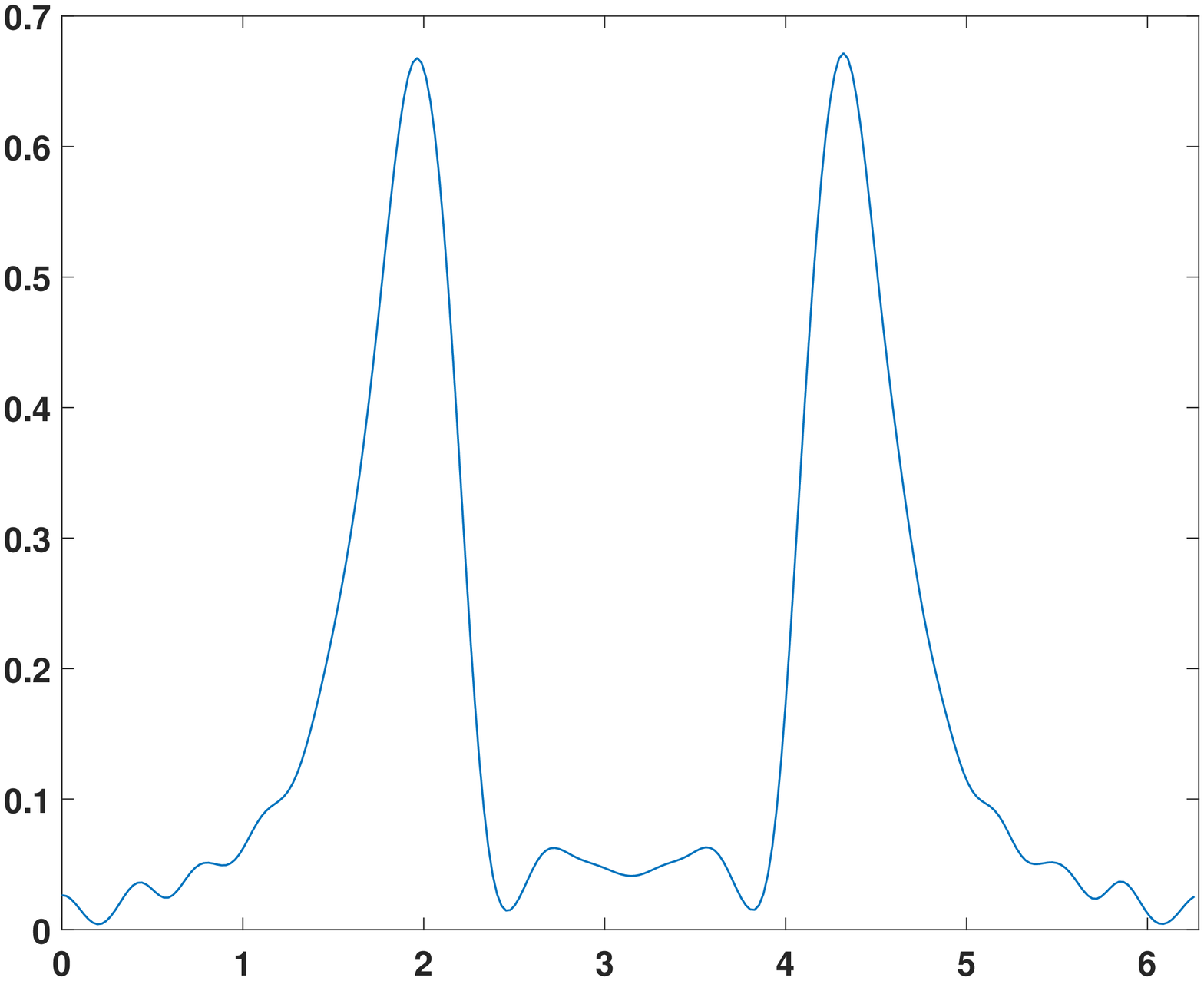}}
\subfigure[\scriptsize{$\sigma=3$}]{
\includegraphics[width=0.3\textwidth]{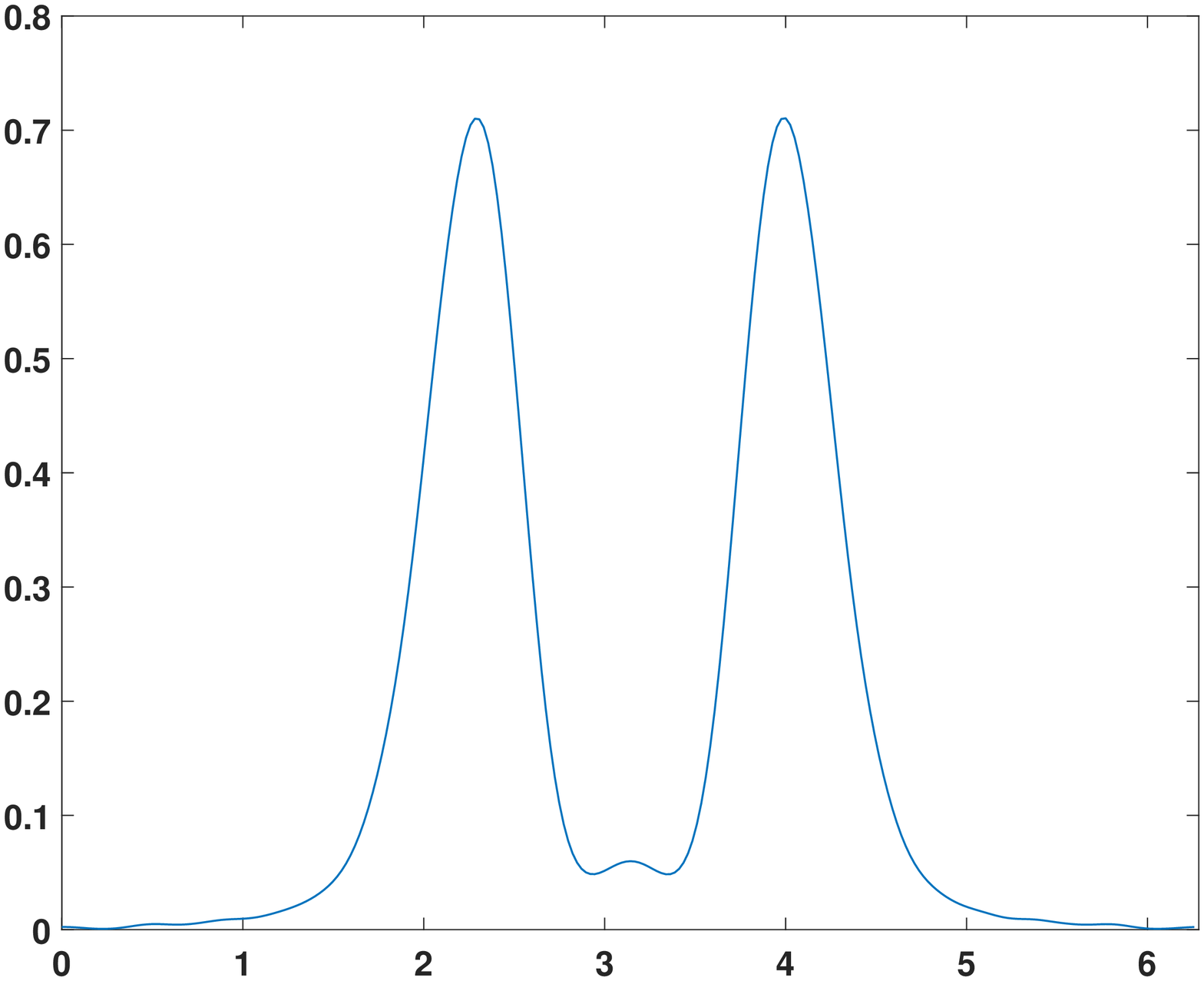}}
\subfigure[\scriptsize{$\sigma=5$}]{
\includegraphics[width=0.3\textwidth]{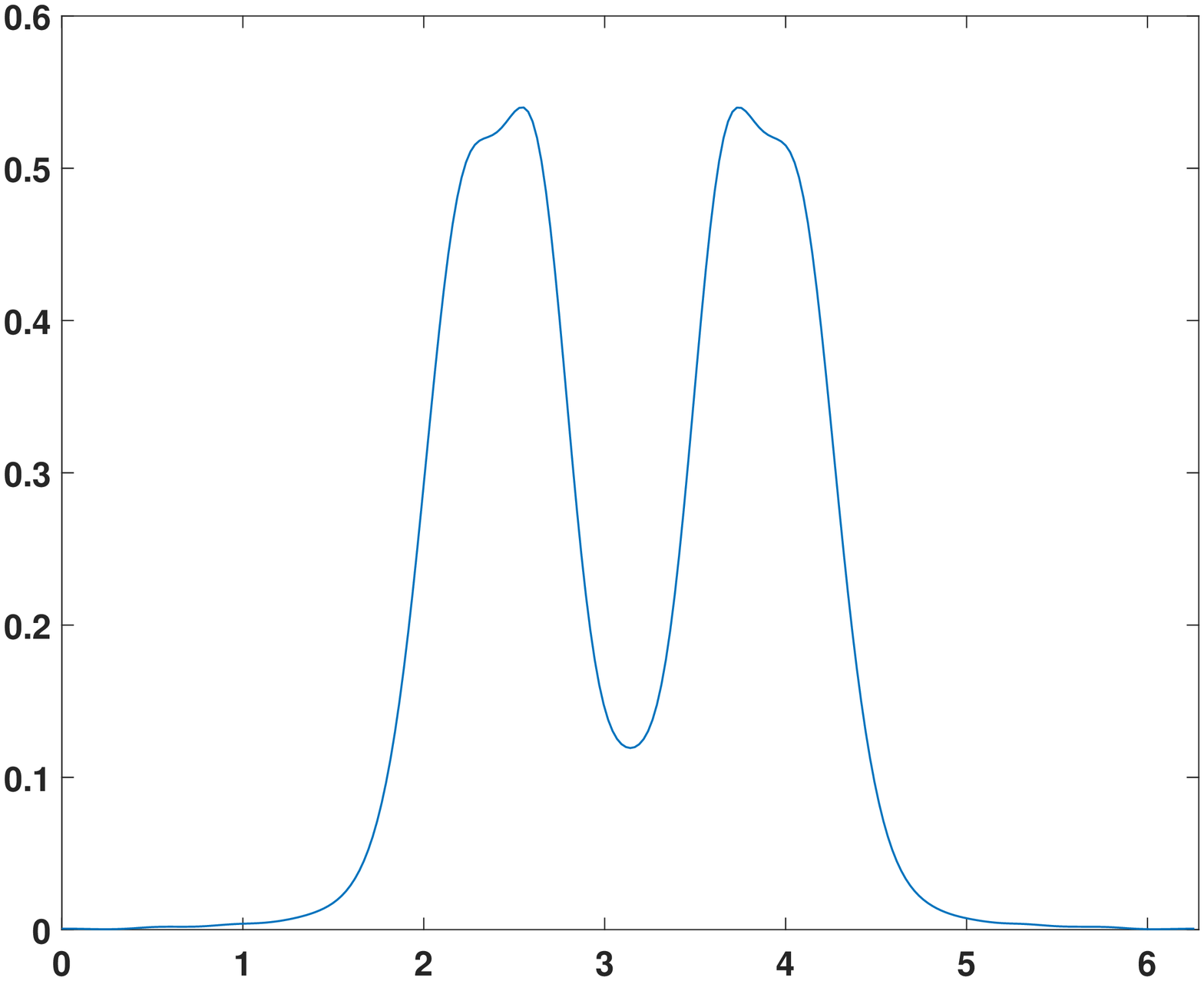}}
\caption{The expected particle density with $\varepsilon=1/4$}
\label{Fig. 8}
\end{figure}

\begin{figure}[H]
\centering
\includegraphics[width=0.5\textwidth]{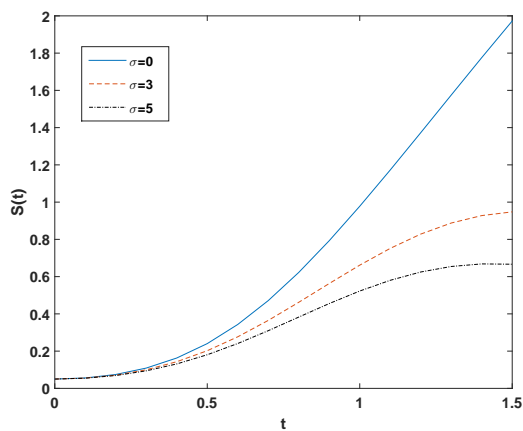}
\caption{The temporal behavior of $S(t)$}
\label{Fig. 9}
\end{figure}

\section{Conclusion}
\label{sec 6}
Here we present a new Bloch decomposition-based stochastic Galerkin algorithm for the one-dimensional Schr\"{o}dinger equation with a periodic lattice potential and a random external potential. We mainly focus on the case where the randomness is relatively weak. We first diagonalize the periodic part of the Hamiltonian operator using Bloch decomposition so that the effects of dispersion and the lattice potential are computed together. For the random potential, we utilize the general polynomial chaos expansion with a Galerkin procedure to form an ODE system which can be exactly solved. We prove analytically that this algorithm has the property of weak conservation of mass and thus it is unconditionally stable. Numerical experiments show that it is second order in time, achieves fast convergence in spatial discretization and gPC order and even has the property of weak conservation of energy. Moreover, we show the superiority of our method over the traditional time-splitting spectral method with Monte Carlo method and stochastic collocation method in the sense that the numerical cost of our method is much lower for the problem we consider, especially in the case where the periodic potential is non-smooth and/or $\varepsilon\ll 1$ since our method allows a larger time step size. Finally, we see that our method can successfully capture the phenomenon of Anderson localization.

For future extensions, we may study the approximation of orthogonal polynomials in a more theoretical way and expect some analytical results on the convergence rate. Furthermore, we may enlarge the space of random bases so that a more accurate basis for the square integrable functionals of random variables can be chosen.

%% The Appendices part is started with the command \appendix;
%% appendix sections are then done as normal sections
%% \appendix

%% \section{}
%% \label{}

%% If you have bibdatabase file and want bibtex to generate the
%% bibitems, please use
%%
%%  \bibliographystyle{elsarticle-num}
%%  \bibliography{<your bibdatabase>}

%% else use the following coding to input the bibitems directly in the
%% TeX file.

\end{document}